\def\Vec#1{\mbox{\boldmath $#1$}}
\theoremstyle{plain}
\newtheorem{theorem}{Theorem}[section]
\newtheorem{proposition}{Proposition}[section]
\newtheorem{corollary}{Corollary}[section]
\newtheorem{lemma}{Lemma}[section]
\newtheorem{remark}{Remark}[section]
\newtheorem{example}{Example}[section]
\numberwithin{equation}{section}
\title[Magnetic vector fields]{Magnetic unit vector fields}
\subjclass[2010]{Primary 53B25; Secondary 53C22, 53C43, 53D25}
\author[J.~Inoguchi]{Jun-ichi Inoguchi}
\address[J.~Inoguchi]{Institute of Mathematics, 
University of Tsukuba, 1-1-1 Tennodai Tsukuba, Ibaraki,
305-8571, Japan}
\email{inoguchi@math.tsukuba.ac.jp}
\address[J.~Inoguchi]{{\sl Present address: }
Department of Mathematics,
Hokkaido University,
Sapporo,
060-0810 Japan
}
\email{inoguchi@math.sci.hokudai.ac.jp}
\author[M.~I.~Munteanu]{Marian Ioan Munteanu}
\address[M.~I.~Munteanu]
{Department of Mathematics, 
University  Alexandru Ioan Cuza Iasi, 
Bd. Carol I, n. 11, 
Iasi, 700506, Romania}
\email{marian.ioan.munteanu@gmail.com}
\date{\today}
\dedicatory{Dedicated to professor Kazumi Tsukada on the occasion of his retirement}
\begin{document}

\vspace{18mm}
\setcounter{page}{1}
\thispagestyle{empty}

\begin{abstract}
We show that a unit vector field on an oriented Riemannian manifold is a critical point of the
Landau Hall functional if and only if it is a critical point of the Dirichlet energy functional.
Therefore, we provide a characterization for a unit vector field to be a magnetic map into its unit tangent sphere bundle.
Then, we classify all magnetic left invariant unit vector fields on $3$-dimensional Lie groups.
\end{abstract}

\maketitle

\section{Introduction}
\footnote{{\small accepted to
Revista de la Real Academia de Ciencias Exactas, Físicas y Naturales. 
Serie A. Matem\'aticas}}
Smooth vector fields appear in physics or technology as 
mathematical models of ``fields". For instance, a static magnetic field 
is a smooth vector field $\Vec{B}$ on a region of the 
Euclidean space $3$-space $\mathbb{E}^3$ satisfying 
the \emph{Gauss' law} $\mathrm{div}\>\Vec{B}=0$.
A static magnetic filed $\Vec{B}$ is \emph{uniform} if it
is covariantly constant. Thus parallel vector fields are mathematical models 
of uniform static magnetic field.

Now let us turn our attention to smooth vector fields on 
(oriented) Riemannian manifolds. 
The \emph{bending energy} (biegung) 
$\mathcal{B}$ of a vector field $X$ 
on an oriented Riemannian manifold $(M,g)$ is 
\[
\mathcal{B}(X)=\int_{M}|\nabla{X}|^{2}\,dv_g.
\]
The bending energy of vector fields measures to what extent 
globally defined vector fields fail to be parallel.

Additionally, we may consider 
\emph{Dirichlet energy} 
\[
E(X)=\int_{M}\frac{1}{2}|dX|^{2}\,dv_g
\]
of a vector field $X$. Here we regard $X$ as a smooth map 
from $M$ to its tangent bundle $TM$ (equipped with a natural 
Riemannian metric $g_{_S}$ derived from $g$). Critical points 
of the Dirichlet energy are called \emph{harmonic maps} \cite{EL}.
These two variational problems are closely related each other. 
In fact, when $(M,g)$ is compact and oriented, we have 
\[
2E(X)=\dim M\cdot\mathrm{Vol}(M)+\mathcal{B}(X).
\] 
To investigate these functionals, we should be nervous and careful about 
mapping space and variations. 
In fact, the bending energy is considered
on the space $\mathfrak{X}(M)$ of all smooth vector fields. 
To deduce the Euler-Lagrange equation we compute the 
first variation through variations 
$\{X^{(s)};\>|s|<\varepsilon\}\subset\mathfrak{X}(M)$.

On the other hand, usually Dirichlet energy is 
considered on the space $C^{\infty}(M,TM)$ of all 
smooth maps from $M$ into $TM$. The first variation 
is computed for variations 
$\{
X^{(s)};\>|s|<\varepsilon
\}\subset C^{\infty}(M,TM)$.

Unfortunately, the later setting is quite discouraging.
Actually, Ishihara and Nouhaud showed that the only vector field 
which are harmonic maps are parallel vector fields if $M$ is compact \cite{Ishihara, Nouhaud}.
Even if we restrict the variations in 
$\mathfrak{X}(M)$, the same conclusion holds
(showed by Gil-Medrano~\cite{Gil-Med}).

One of the appropriate mapping space for Dirichlet or bending energy 
is the space $C^{\infty}(M,UM)$ of all smooth maps 
form $M$ into its unit tangent sphere bundle $UM$ or 
the space $\mathfrak{X}_{1}(M)$ of all smooth unit vector fields.
In fact, a unit vector field $X$ is a critical point 
of $\mathcal{B}$ through (compactly supported) 
variations in $\mathfrak{X}_{1}(M)$
if and  only if 
\begin{equation}\label{eq:HUVT}
\overline{\Delta}_gX=|\nabla X|^2X,
\end{equation}
where 
$\overline{\Delta}_g$ is the \emph{rough Laplacian} (see \eqref{eq:roughL}).
Unit vector fields satisfying \eqref{eq:HUVT} are referred as to 
\emph{harmonic unit vector fields} \cite{DP}.

Next, $X$ is a harmonic map into $UM$, 
that is, critical point 
of $E$ through (compactly supported)  
variations in $C^{\infty}(M,UM)$
if and  only if $X$ satisfies \eqref{eq:HUVT} and, in addition,
\begin{equation*}
\label{eq:HMUVT}
\mathrm{tr}_{g}R(\nabla_\cdot X,X)\cdot=0,
\end{equation*}
where $R$ is the Riemannian curvature. 
Harmonic unit vector fields have been studied 
extensively. See a monograph by Dragomir and Perrone \cite{DP}. 
For example, Gonz{\'a}lez-D{\'a}vila and Vanhecke \cite{GV2002UMI}
classified left invariant unit harmonic vector fields on
$3$-dimensional Lie groups. Accordingly, in the non-unimodular case,
the left-invariant harmonic unit vector fields determining 
harmonic maps are vary rare. In \cite{GV}, the same authors study the stability and 
instability of harmonic and minimal unit vector fields on three-dimensional compact manifolds, 
in particular on compact quotients of unimodular Lie groups.

Furthermore, every unit vector field is an immersion of $M$ into $UM$. 
Thus we may develop submanifold geometry of unit vector fields in $UM$.
For example, the study of minimal vector fields was initiated by Gluck and Ziller in \cite{GZ}.
They studied \emph{optimal unit vector fields} on the
unit $3$-sphere $\mathbb{S}^3$. 
In \cite{HY}, Han and Yim proved that Hopf vector fields on odd dimensional spheres ${\mathbb{S}}^{2m+1}$
are harmonic maps into $U\mathbb{S}^{2m+1}$. In the same paper \cite{HY}, a converse is proved for $m=1$, 
that is a unit vector field on $\mathbb{S}^3$ is a harmonic map into $U\mathbb{S}^3$ if and only if it is a
Hopf vector field on $\mathbb{S}^3$. In fact, Hopf vector fields on $\mathbb{S}^{2m+1}$ are precisely the unit
Killing vector fields \cite{W96}. Later on,
Tsukada and Vanhecke studied minimal unit vector fields on Lie groups.
In particular, they classified left invariant minimal unit vector fields on
$3$-dimensional Lie groups \cite{TV}. 
It should be remarked that model spaces of Thurston geometries are realized as $3$-dimensional 
Lie groups with left invariant metric except the product space $\mathbb{S}^2\times
\mathbb{R}$. These observations suggest us to introduce a new variational problem
to look for ``optimal" unit vector fields.

Motivated by static magnetic theory, we have 
started our investigation on \emph{magnetized harmonic vector fields}
(magnetic vector fields, in short) 
\cite{IM2,IM3}. A vector field $X$ of $(M,g)$ is said to be 
\emph{magnetic} if it is a critical point of the 
Landau-Hall functional, which is a
magnetization of Dirichlet energy (see \S \ref{sec:2.3} for the 
precise definition).

In this paper we give a  detailed investigation on 
magnetized unit vector fields on Riemannian manifolds.
In particular, we classify magnetic left invariant unit vector fields on $3$-dimensional Lie groups.

\section{Magnetic maps}  
\subsection{Magnetic fields}
Let $(N,h)$ be an $n$-dimensional Riemannian manifold.
A \emph{magnetic field} is a closed 2-form $F$ on $N$
and the \emph{Lorentz force} of a magnetic field $F$ on $(N,h)$ is 
the unique $(1,1)$-tensor field $L$ given by
\begin{equation*}
\label{Lorentzforce}
g(LX, Y) = F(X, Y), \quad \forall X,Y\in {\mathfrak{X}}(N).
\end{equation*}
\subsection{Energy and tension} 
Let $f:(M,g)\longrightarrow (N,h)$ be a smooth maps between two Riemannian manifolds
$(M,g)$ of dimension $m$ and $(N,h)$ of dimension $n$. Assume that $M$ is 
\emph{oriented}. The \emph{Dirichlet energy} of a smooth map $f:M\to N$ 
over a closed region $\mathsf{D}\subset M$ is  
defined by
\[
E(f;\mathsf{D})=\int_{\mathsf{D}}\frac{1}{2}|df|^2\,dv_g.
\]
When $M$ is compact, we use an abbreviation $E(f)=E(f;M)$. 

The \emph{second fundamental form} $\nabla df$ of $f$ is defined by
\[
(\nabla df)(X,Y)=\nabla^{f}_{X}f_{*}Y
-f_{*}(\nabla_XY),
\ \  X,Y\in\mathfrak{X}(M).
\]
We adopt the notations $\nabla$ and $\nabla^N$ for the Levi-Civita connections of $M$ and $N$,
respectively and $\nabla^f$ for the connection in the pull-back bundle $f^{*}TN$.
The \emph{tension field} $\tau(f)$ is a vector field along $f$, that is a section of $f^{*}TN$ defined by
\[
\tau(f)=\mathrm{tr}_{g}(\nabla df)=\sum_{i=1}^{m}(\nabla df)(e_i,e_i),
\]
where $\{e_1,e_2,\cdots, e_m\}$ is a local orthonormal frame field of $M$.

A smooth map $f:M\to N$ is said to be a 
\emph{harmonic map} if it is a critical point of the Dirichlet energy over all
compactly supported regions in $M$ \cite{EL}.
The Euler-Lagrange equation of the variational problem of the Dirichlet energy 
is 
\[
\tau(f)=0.
\]
In case when $f$ is an isometric immersion, then 
$\tau(f)=mH$, where $H$ is the mean curvature vector field.
Thus an isometric immersion is a harmonic map if and only if it is a 
minimal immersion.

\subsection{Magnetic maps}
\label{sec:2.3}
Let $(M,g)$ and $(N,h)$ be Riemannian manifolds as before. 
Take a magnetic field $F$ on $N$ and a vector field $\Xi$ on $M$.
A smooth map $f:(M,g)\to (N,h,F)$ is said to be a
\emph{magnetic map} (or \emph{magnetized harmonic map}) of 
\emph{charge} $q$ with the directional vector field $\Xi$, if 
it satisfies 
\[
\tau(f)=q\,Lf_{*}\Xi ~,
\]
for some constant $q$. 
The prescribed vector field $\Xi$ controls 
the direction of the movement of the image of the magnetic map $f$ in $N$ under the influence of magnetic field 
$F$. See e.g. \cite{IM1}.

Under the assumption $\Xi$ is \emph{divergence free} and $F$ is 
\emph{exact}, 
magnetic maps are characterized as critical points
of the \emph{Landau-Hall functional}:
\[
\mathrm{LH}(f;\mathsf{D})=E(f;\mathsf{D})+q\int_{\mathsf{D}}A(f_{*}\Xi)\,dv_{g}
\]
under all compactly supported variations \cite{IM1}. Here $A$ is a potential $1$-form of $F$ 
(called the \emph{magnetic potential}).

\section{Magnetic vector fields}
\subsection{The tangent bundles}
As is well known, the tangent 
bundle of a Riemannian manifold admits an almost 
K{\"a}hler structure, naturally induced from 
the Riemannian structure of the 
base manifold. In particular, the canonical 
symplectic form of the tangent bundle, 
is a magnetic field.
We briefly present a collection of basic materials 
about the geometry of tangent bundles for the study of vector fields \cite{IM4}. 

Let $TM$ be the tangent bundle of a Riemannian manifold 
$(M,g)$ with the projection $\pi:TM \to M$. 
The \emph{vertical distribution} $\mathcal{V}$ is defined by 
\[
\mathcal{V}_{(p;u)}=\mathrm{Ker}(\pi_{*(p;u)}),\quad   u=(p;u)\in T_{p}M\subset TM.
\]
The Levi-Civita connection $\nabla$ induces a complementary 
distribution $\mathcal{H}$, that is a distribution satisfying  
$T(TM)=\mathcal{H}\oplus \mathcal{V}$. The distribution 
$\mathcal{H}$ is called a \emph{horizontal distribution} determined by 
$\nabla$. In addition there exists a linear isomorphism 
$\mathsf{h}=\mathsf{h}_{(p;u)}:T_{p}M\to \mathcal{H}_{(p;u)}$. 
The isomorphism $\mathsf{h}$ is called the 
\emph{horizontal lift operation}. There exists also 
a linear isomorphism 
$\mathsf{v}=\mathsf{v}_{(p;u)}:T_{p}M\to \mathcal{V}_{(p;u)}$. 
The isomorphism $\mathsf{v}$ is called the 
\emph{vertical lift operation}. These operations are naturally 
extended to vector fields.

The Riemannian metric $g$ of $(M,g)$ induces an almost
K{\"a}hler structure $(g_{_S},J)$ on the tangent bundle $TM$ of $M$:
\[
g_{_S}(X^\mathsf{h},Y^\mathsf{h})=
g_{_S}(X^\mathsf{v},Y^\mathsf{v})=g(X,Y)\circ \pi,
 \ \ 
g_{_S}(X^\mathsf{h},Y^\mathsf{v})=0,
\]
\[
JX^{\mathsf{h}}=X^{\mathsf{v}},\ \ 
JX^{\mathsf{v}}=-X^{\mathsf{h}},
\]
for all $X$, $Y\in\mathfrak{X}(M)$. 
The metric $g_{_S}$ is the so-called \emph{Sasaki lift metric} 
of $TM$. 
The fundamental $2$-form $F$ of 
$(TM,g_{_{S}},J)$ 
defined by 
$F(\cdot,\cdot)=g_{_{S}}(J\cdot,\cdot)$ is a symplectic form on $TM$ and called 
the \emph{canonical $2$-form} or \emph{canonical symplectic form} 
of $TM$. Since $F$ is closed, we regard $F$ as a 
canonical magnetic field on $TM$. It should be 
remarked that the canonical magnetic field $F$ is exact. Indeed, 
$F$ is represented as $F=-d\omega$. Here the $1$-form $\omega$ 
(called the \emph{canonical $1$-form}) is 
defined by
\[
\omega_{(p;u)}(X^{\mathsf h})=g(u,X_p),
\quad 
\omega_{(p;u)}(X^{\mathsf v})=0.
\]
Here we use the {\sl{Det}} convention.
 Hereafter, we equip $TM$ with the magnetic field $F=-d\omega$.

\subsection{Tension fields}
Take a vector field $X$ on $M$. Regarding $X$ as a smooth map 
from $(M,g)$ into $(TM,g_{_S},F)$, then the tension 
field $\tau(X)$ of $X\in C^{\infty}(M,TM)$ is given by \cite{Ishihara}:
\[
\tau(X)=-\left\{(
\mathrm{tr}_{g}R(\nabla_\cdot X,X)\cdot\,)^{\mathsf{h}}
+(\overline{\Delta}_{g}X)^{\mathsf{v}}
\right\}\circ X,
\] 
where $\mathsf{h}$ and $\mathsf{v}$ are horizontal lift and 
vertical lift from $M$ to $TM$, respectively.
The operator $\overline{\Delta}_{g}$ acting on $\mathfrak{X}(M)$ is called the 
\emph{rough Laplacian} and it is defined by
\begin{equation}
\label{eq:roughL}
\overline{\Delta}_{g}=-\sum_{i=1}^{m}
\left(\nabla_{e_i}\nabla_{e_i}-\nabla_{\nabla_{e_i}e_i}
\right),
\end{equation}
where $\{e_1,e_2,\cdots, e_m\}$ is a local orthonormal frame field of $M$ as before.

\begin{theorem}[\cite{Gil-Med}]
A vector field $X:M\to TM$ is a critical point of the Dirichlet energy 
with respect to all compactly supported variations in $\mathfrak{X}(M)$ 
if and only if 
$\overline{\Delta}_{g}X=0$.
\end{theorem}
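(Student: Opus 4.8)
The plan is to deduce this from the first variation formula for the Dirichlet energy, recalled above, combined with the explicit tension field $\tau(X)$ of a vector field viewed as a map $M\to(TM,g_{_S})$. The key structural observation is that a variation of $X$ taken \emph{within} $\mathfrak{X}(M)$ is, when $X$ is regarded as a map into $TM$, a \emph{vertical} variation: if $\{X^{(s)}\}$ is such a variation, constant outside a compact set $\mathsf{D}$, then $\pi\circ X^{(s)}=\mathrm{id}_M$ for every $s$, so the variation vector field $V=\left.\tfrac{\partial}{\partial s}\right|_{s=0}X^{(s)}$ lies in $\ker\pi_{*}=\mathcal{V}$, i.e.\ $V=V_0^{\mathsf{v}}$ for a compactly supported $V_0\in\mathfrak{X}(M)$; and every such $V_0$ arises this way, e.g.\ from $X^{(s)}=X+sV_0$. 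Thus the admissible variation fields are precisely the vertical lifts of compactly supported vector fields on $M$.

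Granting this, I would plug $V=V_0^{\mathsf{v}}$ and the quoted formula
$\tau(X)=-\bigl\{(\mathrm{tr}_{g}R(\nabla_{\cdot}X,X)\cdot)^{\mathsf{h}}+(\overline{\Delta}_{g}X)^{\mathsf{v}}\bigr\}\circ X$
into the first variation formula
\[
\left.\frac{d}{ds}\right|_{s=0}E(X^{(s)};\mathsf{D})=-\int_{\mathsf{D}}g_{_S}\bigl(\tau(X),V\bigr)\,dv_g .
\]
Because $g_{_S}$ makes horizontal and vertical lifts orthogonal and restricts to $g$ on vertical lifts, the horizontal term of $\tau(X)$ contributes nothing and one is left with
\[
\left.\frac{d}{ds}\right|_{s=0}E(X^{(s)})=\int_{M}g\bigl(\overline{\Delta}_{g}X,V_0\bigr)\,dv_g .
\]
Hence $X$ is a critical point with respect to all compactly supported variations in $\mathfrak{X}(M)$ if and only if this integral vanishes for every compactly supported $V_0\in\mathfrak{X}(M)$, which by the fundamental lemma of the calculus of variations is equivalent to $\overline{\Delta}_{g}X=0$, as claimed.

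The computation is short, so the real content lies in the two ingredients invoked: the identification of the admissible variation fields (routine, but it uses that the variation is fibre-preserving and smooth in $s$) and, more substantially, Ishihara's formula for $\tau(X)$, whose derivation rests on the structure equations of the Sasaki metric $g_{_S}$ on $TM$. If one prefers to avoid quoting that formula, an equivalent self-contained route is available: from $X_{*}Y=Y^{\mathsf{h}}+(\nabla_{Y}X)^{\mathsf{v}}$ one gets the pointwise identity $|dX|^{2}=m+|\nabla X|^{2}$, hence $2E(X;\mathsf{D})=m\,\mathrm{Vol}(\mathsf{D})+\mathcal{B}(X;\mathsf{D})$, so that critical points of $E$ coincide with those of the bending energy $\mathcal{B}$; then I would differentiate $\mathcal{B}(X^{(s)})=\int_{M}\sum_{i}g(\nabla_{e_i}X^{(s)},\nabla_{e_i}X^{(s)})\,dv_g$ in $s$, integrate by parts (discarding the divergence of the vector field $\sum_{i}g(V_0,\nabla_{e_i}X)\,e_i$, which has compact support), and use $\overline{\Delta}_{g}=-\sum_{i}(\nabla_{e_i}\nabla_{e_i}-\nabla_{\nabla_{e_i}e_i})$ to reach $\left.\tfrac{d}{ds}\right|_{s=0}\mathcal{B}(X^{(s)})=2\int_{M}g(\overline{\Delta}_{g}X,V_0)\,dv_g$, concluding as before. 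I expect the integration-by-parts bookkeeping in this second route — tracking the non-tensorial frame terms until the rough Laplacian emerges cleanly — to be the fussiest part of the argument.
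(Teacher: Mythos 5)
Your proposal is correct and follows essentially the same route the paper itself uses: it is precisely the computation carried out in \S 3.5 (the $q=0$ part of the first variation of the Landau--Hall functional), namely plugging Ishihara's tension field formula and the identification $\mathscr{V}=V^{\mathsf v}\circ X$ of the variational field as a vertical lift into the first variation of $E$, so that the horizontal term drops out by orthogonality and one is left with $\int_{\mathsf D}g(V,\overline{\Delta}_{g}X)\,dv_g$. Your alternative route via the bending energy is a fine equivalent, but the main argument matches the paper's.
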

Note that vector fields which are critical points of the Dirichlet energy 
with respect to all compactly supported variations in $\mathfrak{X}(M)$ 
are called \emph{harmonic vector field} in some literature; see e.g. \cite{DP}
and references therein.

\subsection{}
Let $\varDelta_g$ be the Laplace-Beltrami operator of $(M,g)$ acting on 
$1$-forms. Via  the musical isomorphisms between $TM$ and $T^{*}M$, 
we graft $\varDelta_g$ for vector fields (denoted by the same symbol $\varDelta_g$). 
A vector field $X$ is said to be $\varDelta_g$-\emph{harmonic} if $\varDelta_g X=0$. 

The following \emph{Weitzenb{\"o}ck formula} measures the difference between
 $\varDelta_{g}$ and $\overline{\Delta}_g$
\[
\varDelta_{g}X=\overline{\Delta}_gX+SX,
\]
where $S$ is the \emph{Ricci operator} of $M$.

\begin{corollary}
A vector field $X:M\to TM$ is a critical point of the Dirichlet energy 
with respect to all compactly supported variations in $\mathfrak{X}(M)$ 
if and only if it satisfies $\varDelta_{g}X=SX$.
\end{corollary}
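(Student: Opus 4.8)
The plan is to combine the previous theorem with the Weitzenb\"ock formula. By the cited theorem of Gil-Medrano, $X:M\to TM$ is a critical point of the Dirichlet energy with respect to all compactly supported variations in $\mathfrak{X}(M)$ if and only if $\overline{\Delta}_gX=0$. So it suffices to show that, for $X\in\mathfrak{X}(M)$, one has $\overline{\Delta}_gX=0$ precisely when $\varDelta_gX=SX$.

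First I would recall the Weitzenb\"ock formula quoted just above, namely $\varDelta_gX=\overline{\Delta}_gX+SX$. From this identity the equivalence is immediate: if $\overline{\Delta}_gX=0$, then $\varDelta_gX=SX$; conversely, if $\varDelta_gX=SX$, then subtracting gives $\overline{\Delta}_gX=\varDelta_gX-SX=0$. Substituting this equivalence into the statement of the theorem yields the corollary.

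Since the argument is a one-line algebraic manipulation built on results already established in the excerpt, there is essentially no obstacle; the only thing to be careful about is that the musical-isomorphism conventions used to graft $\varDelta_g$ onto vector fields are exactly the same ones under which the Weitzenb\"ock formula was stated, so that the two occurrences of $\varDelta_gX$ genuinely refer to the same object. Given that consistency (which the excerpt sets up explicitly), the proof is complete.
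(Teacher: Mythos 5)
Your argument is correct and is exactly the intended one: the paper derives this corollary by substituting the Weitzenb\"ock identity $\varDelta_gX=\overline{\Delta}_gX+SX$ into the criterion $\overline{\Delta}_gX=0$ from the preceding theorem of Gil-Medrano. Nothing further is needed.
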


For a vector field, the condition of being harmonic map is much stronger than that of being 
harmonic vector field.

\begin{theorem}[\cite{Gil-Med}]
A vector field $X:(M,g)\to (TM,g_{_S})$ is a harmonic map if and only if 
\[
\mathrm{tr}_{g}R(\nabla_\cdot X,X)\cdot=0,
\quad
\overline{\Delta}_{g}X=0.
\]
\end{theorem}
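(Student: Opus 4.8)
The plan is to deduce the characterization of harmonic maps $X:(M,g)\to(TM,g_{_S})$ directly from the formula for the tension field $\tau(X)$ recalled in the previous subsection, namely
\[
\tau(X)=-\bigl\{(\mathrm{tr}_{g}R(\nabla_\cdot X,X)\cdot)^{\mathsf{h}}+(\overline{\Delta}_{g}X)^{\mathsf{v}}\bigr\}\circ X.
\]
By definition, $X$ is a harmonic map precisely when $\tau(X)=0$ identically on $M$. First I would invoke the fact, recorded in the discussion of tangent bundles, that at each point $(p;u)\in TM$ the horizontal lift $\mathsf{h}_{(p;u)}:T_pM\to\mathcal H_{(p;u)}$ and the vertical lift $\mathsf{v}_{(p;u)}:T_pM\to\mathcal V_{(p;u)}$ are linear isomorphisms onto the two summands of the direct sum decomposition $T_{(p;u)}(TM)=\mathcal H_{(p;u)}\oplus\mathcal V_{(p;u)}$. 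Consequently the horizontal part and the vertical part of $\tau(X)$ vanish independently: $\tau(X)=0$ if and only if both $(\mathrm{tr}_{g}R(\nabla_\cdot X,X)\cdot)^{\mathsf h}\circ X=0$ and $(\overline{\Delta}_gX)^{\mathsf v}\circ X=0$.

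Next I would remove the lifts. Since $\mathsf h$ and $\mathsf v$ are injective, $(\mathrm{tr}_{g}R(\nabla_\cdot X,X)\cdot)^{\mathsf h}$ vanishes along $X$ exactly when the underlying vector field $\mathrm{tr}_{g}R(\nabla_\cdot X,X)\cdot$ vanishes on $M$, and likewise $(\overline{\Delta}_gX)^{\mathsf v}$ vanishes along $X$ exactly when $\overline{\Delta}_gX=0$ on $M$. This yields the two stated equations
\[
\mathrm{tr}_{g}R(\nabla_\cdot X,X)\cdot=0,\qquad \overline{\Delta}_{g}X=0,
\]
and the argument is reversible, giving the equivalence. (Alternatively, one can observe that this is just the specialization to the case $N=TM$, $F$ absent, of the harmonic-map equation $\tau(f)=0$, combined with Ishihara's formula; but the direct route above is cleanest.)

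There is essentially no obstacle here: the entire content is packaged in the formula for $\tau(X)$ stated just above, which is due to Ishihara and quoted in the excerpt, together with the elementary linear-algebra fact that $\mathcal H\oplus\mathcal V$ is a direct sum and the lift operators are isomorphisms. The only point requiring a word of care is that $\tau(X)$ is a section of $X^*T(TM)$, so when we say "the horizontal and vertical parts vanish separately" we are using the pointwise splitting of $X^*T(TM)$ induced by $\mathcal H\oplus\mathcal V$; once that is noted, the proof is a two-line unwinding of definitions.
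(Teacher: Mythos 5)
Your argument is correct and is exactly the route the paper intends: the theorem is stated immediately after Ishihara's formula for $\tau(X)$, and the equivalence follows from $\tau(X)=0$ together with the pointwise splitting $T(TM)=\mathcal{H}\oplus\mathcal{V}$ and the injectivity of the lift operators, just as you describe. No gaps.
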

See also \cite[Proposition 2.12]{DP}.

As we have also mentioned in the Introduction, 
harmonicity of vector field is a too strong restriction for vector fields.
Actually, Gil-Medrano \cite{Gil-Med} showed the following fact.
\begin{theorem}
Every harmonic vector field on a compact oriented Riemannian manifold $M$ 
is a parallel vector field.
\end{theorem}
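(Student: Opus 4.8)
The plan is to combine the Euler--Lagrange characterization recalled just above with a Bochner-type integration by parts on the closed manifold $M$. By the preceding theorem, a harmonic vector field $X$ is precisely one satisfying $\overline{\Delta}_{g}X=0$, so it is enough to prove that this equation forces $\nabla X=0$ whenever $M$ is compact and oriented.

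First I would pair the rough Laplacian of $X$ with $X$ itself. Fix a point $p\in M$ and a local orthonormal frame $\{e_1,\dots,e_m\}$ which is geodesic at $p$, so that $\nabla_{e_i}e_j$ vanishes at $p$. Then at $p$ one has $\overline{\Delta}_{g}X=-\sum_i\nabla_{e_i}\nabla_{e_i}X$, and using $e_i\big(g(\nabla_{e_i}X,X)\big)=g(\nabla_{e_i}\nabla_{e_i}X,X)+g(\nabla_{e_i}X,\nabla_{e_i}X)$ one obtains
\[
g(\overline{\Delta}_{g}X,X)=|\nabla X|^{2}-\mathrm{div}\,Z,
\]
where $Z$ is the vector field defined invariantly by $g(Z,Y)=g(\nabla_{Y}X,X)$ for all $Y\in\mathfrak{X}(M)$; equivalently $Z=\tfrac12\,\mathrm{grad}\,(|X|^{2})$. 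Both sides of this identity are tensorial, so it holds globally on $M$.

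Next, since $M$ is compact and oriented, the divergence theorem gives $\int_{M}\mathrm{div}\,Z\,dv_{g}=0$, whence
\[
\int_{M}g(\overline{\Delta}_{g}X,X)\,dv_{g}=\int_{M}|\nabla X|^{2}\,dv_{g}.
\]
Feeding in $\overline{\Delta}_{g}X=0$ yields $\int_{M}|\nabla X|^{2}\,dv_{g}=0$, and because the integrand is continuous and nonnegative we conclude $\nabla X\equiv 0$; that is, $X$ is parallel.

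The only delicate point --- the ``main obstacle'', such as it is --- is the legitimacy of the integration-by-parts step: one must check that $\overline{\Delta}_{g}$ really is the formal $L^{2}$-adjoint of $\nabla$ on $\mathfrak{X}(M)$ and that no boundary terms appear. The first is exactly the pointwise identity computed above, and the second is automatic because here ``compact'' means compact without boundary; orientability is used only to have a global volume form $dv_{g}$ for the divergence theorem. Everything else is a routine computation.
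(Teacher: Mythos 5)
Your argument is correct. Note that the paper itself gives no proof of this statement---it is quoted from Gil-Medrano \cite{Gil-Med}---but your Bochner-type argument is the standard one and is essentially what that reference does: the pointwise identity $g(\overline{\Delta}_{g}X,X)=|\nabla X|^{2}-\tfrac12\,\mathrm{div}\,\mathrm{grad}\,|X|^{2}$ is verified correctly in a geodesic frame, and integrating it over the closed oriented manifold kills the divergence term, so $\overline{\Delta}_{g}X=0$ forces $\int_{M}|\nabla X|^{2}\,dv_{g}=0$ and hence $\nabla X=0$.
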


\begin{corollary}[Ishihara, Nouhaud]
Every vector field on a compact oriented Riemannian manifold $M$, 
which is a harmonic map into $TM$, is parallel.
\end{corollary}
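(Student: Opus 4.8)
The plan is to read off this corollary from the two theorems immediately preceding it, so that essentially no new work is required. First I would recall the characterization of harmonic vector fields $X:(M,g)\to(TM,g_{_S})$ stated just above: such a map is harmonic if and only if $\mathrm{tr}_{g}R(\nabla_\cdot X,X)\cdot=0$ together with $\overline{\Delta}_{g}X=0$. In particular, a vector field that is a harmonic map into $TM$ satisfies $\overline{\Delta}_{g}X=0$; by Gil-Medrano's theorem this is precisely the Euler--Lagrange equation for $X$ to be a critical point of the Dirichlet energy under variations through $\mathfrak{X}(M)$, i.e. $X$ is a harmonic vector field in that weaker sense. It is worth emphasizing that only the ``vertical'' part $\overline{\Delta}_{g}X=0$ of the harmonic map system is used here; the curvature condition $\mathrm{tr}_{g}R(\nabla_\cdot X,X)\cdot=0$ plays no role.

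Then, since $M$ is compact and oriented, I would invoke the theorem that every harmonic vector field on such an $M$ is parallel, and conclude $\nabla X=0$, which finishes the proof.

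If one instead prefers a self-contained argument rather than this citation chain, the single substantive step --- and the only place genuine analysis enters --- is the integration-by-parts identity on the closed manifold $M$: pairing $\overline{\Delta}_{g}X=0$ with $X$ and integrating gives
\[
0=\int_{M}g(\overline{\Delta}_{g}X,X)\,dv_{g}=\int_{M}|\nabla X|^{2}\,dv_{g},
\]
since $\overline{\Delta}_{g}$ is the non-negative rough Laplacian and $M$ has empty boundary, so the divergence term integrates to zero. Non-negativity of the integrand then forces $\nabla X\equiv 0$. Thus there is no real obstacle here: the corollary is a formal consequence of the stated results, and the ``hard part'' was already handled in establishing the compactness theorem for harmonic vector fields.
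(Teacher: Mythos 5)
Your proposal is correct and follows exactly the route the paper intends: the corollary is an immediate consequence of the two preceding theorems, since a harmonic map $X:M\to TM$ satisfies $\overline{\Delta}_{g}X=0$ and hence is a harmonic vector field, which on a compact oriented manifold must be parallel by Gil-Medrano's theorem. Your optional self-contained integration-by-parts argument is the standard proof of that compactness theorem and is also correct.
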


\subsection{Magnetic vector fields}
Since the harmonicity is a too strong restriction for vector fields, 
we weaken this property to ``magnetic".
Namely, we consider vector fields which are magnetic maps into $TM$. 
A vector field $X$ on $M$ is a {\it magnetic map} with charge $q$ if $X$ satisfies 
$\tau(X)=qJ(X_{*}X)$.

In our previous paper \cite{IM2} we obtained the 
following fundamental formula:
\begin{equation}
\label{eq:X*Y}
X_{*p}Y_{p}=Y^{\mathsf{h}}_{X_p}+(\nabla_{Y}X)^{\mathsf{v}}_{X_p}
\end{equation}
for any vector fields $X$ and $Y$ on $M$.

From this formula, we have
\[
J(X_{*}X)=X^{\mathsf{v}}_{X}-(\nabla_{X}X)^{\mathsf{h}}_X.
\]
Hence the magnetic map equation $\tau(X)=qJ(X_{*}X)$
is the system \cite{IM2}:
\[
\mathrm{tr}_{g}R(\nabla X,X)=q\,\nabla_{X}X,
\quad 
\overline{\Delta}_{g}X=-q X.
\]

In \cite{IM2}, some examples of vector fields satisfying this system are exhibited.

\subsection{LH-critical vector fields} 
Next, we consider again a vector field preserving variation $\{X^{(s)}\}$ through 
a vector field $X$. 
More precisely $\{X^{(s)}\}$ is a smooth map from the 
product manifold $(-\varepsilon,\varepsilon)\times M$ to $TM$ satisfying 
\[
(-\varepsilon,\varepsilon)\times M\ni(s,p)\longmapsto X^{(s)}_p\in T_{p}M,
\quad X^{(0)}_p=X_p.
\]
Here $\varepsilon$ is a sufficiently small positive number. 
The variation $\{X^{(s)}\}$ is interpreted as a smooth map
\[
\chi:(-\varepsilon,\varepsilon)\times M\to TM;
\>\>
\chi(s,p)=X^{(s)}_p.
\]
It should be remarked that for every $s\in (-\varepsilon,\varepsilon)$, 
the correspondence 
$p\longmapsto \chi(s,p)=X^{(s)}_p$ is a vector field on $M$. 
In other words, $X^{(s)}$ is understood as a vector field 
$X^{(s)}:p\longmapsto X^{(s)}_p$.
Note that $X^{(0)}(p)=X_p$ for any $p\in M$. 
Next, since every $X^{(s)}$ is a vector field, 
\[
\pi (X^{(s)}(p))=p
\]
holds for any $s$ and $p\in M$. 

The \emph{variational vector field} $\mathscr{V}$ 
of the variation $\{X^{(s)}\}$ is defined by
\[
\mathscr{V}_{p}=\chi_{*(0,p)}\,
\frac{\partial}{\partial s}\biggr\vert_{(0,p)}.
\]
By definition $\mathscr{V}$ is a section of the 
pull-back bundle $X^{*}T(TM)$.

Since $X$ is an immersion into $TM$, 
one can introduce the fiber metric 
on the pull-back bundle $X^{*}T(TM)$ by 
$X^{*}g_{_S}$.
Then the first variation for the Dirichlet energy is 
given by
\[
\frac{d}{ds}\biggr\vert_{s=0}
E(X^{(s)};{\mathsf{D}})=
-\int_{\mathsf{D}}(X^{*}g_{_S})(\mathscr{V},\tau(X))\,
dv_{g}.
\]
On the other hand, the \emph{tangential vector field} $V$ of 
$\{X^{(s)}\}$ is defined by (see \cite[p.~57]{DP}):
\[
V_p=\frac{d}{ds}\biggr\vert_{s=0}X^{(s)}(p)
=\lim_{s\to 0}\frac{1}{s}\left(
X^{(s)}(p)-X_p\right)\in T_{p}M.
\]
The tangential vector field and the variational vector field are related by
\cite[p.~58]{DP}
\[
\mathscr{V}=V^{\mathsf v}\circ X.
\]
Hence, the first variation for the Dirichlet energy may be expressed as
\begin{align*}
\frac{d}{ds}\biggr\vert_{s=0}
E(X^{(s)};{\mathsf{D}})=&
-\int_{\mathsf{D}}(X^{*}g_{S})(\mathscr{V},\tau(X))\,
dv_{g}
\\
=&
\int_{\mathsf{D}}g_{S}(V^{\mathsf{v}}\circ X,
\left\{(
\mathrm{tr}_{g}R(\nabla_\cdot X,X)\cdot\,)^{\mathsf{h}}
+(\overline{\Delta}_{g}X)^{\mathsf{v}}
\right\}\circ X)\,
dv_{g}
\\
=&
\int_{\mathsf{D}}g(V,\overline{\Delta}_{g}X)
\,dv_g.
\end{align*}
Now let us compute the first variation for the 
magnetic term 
of the Landau-Hall functional, 
namely
\[
\frac{d}{ds}\biggr\vert_{s=0}\int_{\mathsf{D}}\omega_{\chi(s,p)}((X^{(s)})_{*p}X_{p})\,dv_g.
\]
Since $X^{(s)}$ is a vector field, by using 
\eqref{eq:X*Y} we get
\[
\left(X^{(s)}\right)_{*p}X_{p}=X^{\mathsf h}_{\chi(s,p)}
+\left(\nabla_{X}X^{(s)}\right)^{\mathsf v}_{\chi(s,p)}.
\]
From this formula we have
\[
\omega_{{_\chi(s,p)}}\left(
\left(
X^{(s)}
\right)_{*p}X_p
\right)
=
\omega_{_{\chi(s,p)}}\left(
X^{\mathsf h}_{\chi(s,p)}
\right)
=g_{p}(X^{(s)}_p,X_p).
\]
Hence 
\[
\frac{d}{ds}\biggr\vert_{s=0}
\omega_{_{\chi(s,p)}}((X^{(s)})_{*p}X_p)
=g_{_p}(V,X).
\]
The first variation of the magnetic term 
$\displaystyle\int_{\mathsf{D}}\omega_{_{\chi(s,p)}}((X^{(s)})_*X)\,dv_{g}$ is 
computed as
\begin{align*}
\frac{d}{ds}\biggr\vert_{s=0}
\int_{\mathsf{D}}
\omega((X^{(s)})_*X)\,dv_{g}=&
\int_{\mathsf{D}}
\frac{d}{ds}
\omega\left(\left(X^{(s)}\right)_*X\right)\,
\biggr\vert_{s=0}\,dv_{g}
\\
=& \int_{\mathsf{D}}
g(V,X) \,dv_{g}.
\end{align*}
Thus we arrive at the following result:
\begin{align*}
\frac{d}{ds}\biggr\vert_{s=0}
\mathrm{LH}(X^{(s)};{\mathsf{D}})=&
\frac{d}{ds}\biggr\vert_{s=0}
\left(E(X^{(s)};{\mathsf{D}})+q 
\int_{\mathsf{D}}\omega
\left(\left(X^{(s)}\right)_{*}X
\right)\,
dv_g\right)
\\
=&
\int_{\mathsf{D}}g(V,\overline{\Delta}_{g}X+qX)
\,dv_g.
\end{align*}
Thus we obtain the Euler-Lagrange equation for Landau-Hall functional 
on $\mathfrak{X}(M)$.
\begin{theorem}
A vector field $X$ on an oriented Riemannian manifold $(M,g)$ 
is a critical point of the Landau-Hall functional under 
compact supported variations in $\mathfrak{X}(M)$ 
if and only if 
\[
\overline{\Delta}_{g}X=-qX.
\]
\end{theorem}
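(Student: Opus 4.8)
The plan is to compute the first variation of the Landau--Hall functional $\mathrm{LH}(X^{(s)};\mathsf D)=E(X^{(s)};\mathsf D)+q\int_{\mathsf D}\omega((X^{(s)})_*X)\,dv_g$ along an arbitrary compactly supported variation $\{X^{(s)}\}$ inside $\mathfrak X(M)$, and read off the Euler--Lagrange equation. Almost all of the machinery is already assembled in the preceding subsection, so the proof is really a matter of collecting two identities and appealing to the fundamental lemma of the calculus of variations.

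First I would recall, from the computation carried out just above the statement, that the variation of the Dirichlet energy term is
\[
\frac{d}{ds}\biggr\vert_{s=0}E(X^{(s)};\mathsf D)=\int_{\mathsf D}g(V,\overline{\Delta}_gX)\,dv_g,
\]
where $V$ is the tangential vector field of $\{X^{(s)}\}$; this uses the expression for $\tau(X)$ via the rough Laplacian, the relation $\mathscr V=V^{\mathsf v}\circ X$ between the variational and tangential vector fields, and the fact that the horizontal part of $\tau(X)$ drops out when paired against a vertical lift. Next I would recall the parallel computation for the magnetic term: since each $X^{(s)}$ is a \emph{vector field}, formula \eqref{eq:X*Y} gives $(X^{(s)})_{*p}X_p=X^{\mathsf h}_{\chi(s,p)}+(\nabla_XX^{(s)})^{\mathsf v}_{\chi(s,p)}$, and because $\omega$ annihilates vertical vectors while $\omega_{(p;u)}(Y^{\mathsf h})=g(u,Y_p)$, the vertical correction term contributes nothing and
\[
\omega_{\chi(s,p)}\bigl((X^{(s)})_{*p}X_p\bigr)=g_p\bigl(X^{(s)}_p,X_p\bigr).
\]
Differentiating at $s=0$ and using $\frac{d}{ds}\big|_{s=0}X^{(s)}=V$ yields $\frac{d}{ds}\big|_{s=0}\omega_{\chi(s,p)}((X^{(s)})_{*p}X_p)=g_p(V,X)$, hence $\frac{d}{ds}\big|_{s=0}\int_{\mathsf D}\omega((X^{(s)})_*X)\,dv_g=\int_{\mathsf D}g(V,X)\,dv_g$.

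Adding the two contributions gives
\[
\frac{d}{ds}\biggr\vert_{s=0}\mathrm{LH}(X^{(s)};\mathsf D)=\int_{\mathsf D}g\bigl(V,\overline{\Delta}_gX+qX\bigr)\,dv_g.
\]
Since the tangential vector field $V$ may be prescribed arbitrarily among compactly supported sections of $TM$ (one realizes any such $V$ by the variation $X^{(s)}=X+sV$, which stays in $\mathfrak X(M)$), the vanishing of the first variation for all $V$ is, by the fundamental lemma of the calculus of variations, equivalent to $\overline{\Delta}_gX+qX=0$, i.e. $\overline{\Delta}_gX=-qX$. I expect no genuine obstacle here: the only point needing a word of care is the justification that the tangential vector field $V$ ranges over \emph{all} compactly supported vector fields on $M$ — which is immediate from the linear variation $X^{(s)}=X+sV$ — together with the interchange of $\tfrac{d}{ds}$ and $\int_{\mathsf D}$, valid because the integrand is smooth in $(s,p)$ and $\mathsf D$ is compact.
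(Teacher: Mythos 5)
Your proposal is correct and follows essentially the same route as the paper: the first variation of the Dirichlet term via $\mathscr V=V^{\mathsf v}\circ X$, the observation that $\omega$ kills the vertical part of $(X^{(s)})_{*}X$ so the magnetic term reduces to $\int_{\mathsf D}g(X^{(s)},X)\,dv_g$, and the resulting Euler--Lagrange equation $\overline{\Delta}_gX=-qX$. Your explicit remark that every compactly supported $V$ is realized by the linear variation $X^{(s)}=X+sV$ is a small but welcome addition that the paper leaves implicit.
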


\section{Magnetic unit vector fields}
\subsection{}
Let us denote by $UM$ the unit tangent sphere bundle 
of $(M,g)$. Then $UM$ is a hypersurface of $TM$ with 
unit normal vector field $\Vec{U}$. 
Here $\Vec{U}$ is the so-called 
canonical vertical vector field of $TM$.
The key formula we use for $\Vec{U}$ is 
\[
\Vec{U}_{(p;w)}=w^{\mathsf v}_{w}
\]
for any $w\in T_{p}M$.

The almost K{\"a}hler structure $(g_{_S},J)$ induces an 
almost contact metric structure $(\phi,\xi,\eta,g_{_S})$ on $UM$ by
\[
J V=\phi V+\eta(V)\Vec{U},\ \ \xi=-J\Vec{U}.
\]
The $1$-form $\eta$ is a contact form on $UM$. Moreover $\phi$ is skew-adjoint 
with respect to the induced metric and hence $F_U(\cdot,\cdot)=g_{_S}(\phi \cdot,\cdot)$ is a magnetic field 
with Lorentz force $\phi$. Moreover $F_U$ is exact; in fact, 
$F_U=-d\eta$. 

For any vector field $Y$ on $M$ and a unit tangent 
vector $u=(p;u)\in T_{p}M$, its 
horizontal lift $Y^{\mathsf{h}}_{u}$ is tangent to $UM$, 
\textit{i.e.}, $Y^{\mathsf{h}}_{u}\in T_{u}(UM)$.
On the other hand, $Y^{\mathsf{v}}_{u}$ is not always tangent to $UM$.

The \emph{tangential lift} $Y^{\mathsf{v}}_{u}$ is defined by
\[
Y^{\mathsf{t}}_{u}=Y^{\mathsf{v}}_{u}-g(u,Y_p)\Vec{U}_{u}\in 
T_u(UM).
\]

The contact form $\eta$ is given explicitly by
\[
\eta_{(p;u)}(X^{\mathsf h}_{u})=g_{p}(X_p,u),
\ \ 
\eta_{(p;u)}(X^{\mathsf t}_{u})=0.
\]
\subsection{Tension field} 
Let us take now a \emph{unit vector field} $X$ on $M$ and regard it as a 
smooth map $X:M\to UM$.
Then its tension field $\tau_{_1}(X):=\tau(X;UM)$ is 
computed as (see \cite[p.~59]{DP}, \cite{HY}): 
\[
\tau_{_1}(X)=-\left\{(
\mathrm{tr}_{g}R(\nabla_\cdot X,X)\cdot\,)^{\mathsf{h}}
+(\overline{\Delta}_{g}X)^{\mathsf{t}}
\right\}\circ X.
\]
Next, we compute $\phi(X_{*}X)$.
We know that
\[
J(X_{*}X)_{X_p}=\phi(X_{*}X)_{X_p}+\eta(X_{*}X)_{X_p}X^{\mathsf{v}}_{X_p}.
\]
On the other hand, 
\[
J(X_{*}X)_{X}=X^{\mathsf{v}}_{X}-(\nabla_{X}X)^{\mathsf{h}}_{X}
=X^{\mathsf{t}}_{X}+g(X,X)\Vec{U}_{X}-(\nabla_{X}X)^{\mathsf{h}}_{X}.
\]
It follows that
\[
\phi(X_{*}X)_{X}=X^{\mathsf{t}}_{X}
-(\nabla_{X}X)^{\mathsf{h}}_{X},\  \ \eta(X_{*}X)_{X}=g(X,X)=1.
\]
Hence the magnetic map equation $\tau_{_1}(X)=q\phi(X_{*}X)$
is the system
\[
\mathrm{tr}_{g}(R(\nabla_\cdot X,X)\cdot\,)^{\mathsf{h}}_{X}=q(\nabla_{X}X)^{\mathsf{h}}_{X},
\ \ 
(\overline{\Delta}_{g}X)^{\mathsf{t}}_{X}=-qX^{\mathsf{t}}_{X}.
\]
The first equation is equivalent to
\[
\mathrm{tr}_{g}R(\nabla_\cdot X,X)\cdot=-q\nabla_{X}X.
\] 
The second equation is rewritten as
\[
(\overline{\Delta}_{g}X)^{\mathsf{v}}_{X}-g(X,\overline{\Delta}_{g}X)\Vec{U}_X
=-q(X^{\mathsf{v}}_{X}-g(X,X)\Vec{U}_X).
\]
This equation is equivalent to
\[
\overline{\Delta}_{g}X-g(X,\overline{\Delta}_{g}X)X=-q(X-g(X,X)X)=0.
\]
Thus we obtain
\[
\overline{\Delta}_{g}X=g(X,\overline{\Delta}_{g}X)X
=|\nabla X|^2\,X.
\]
\begin{theorem}
\label{thm4.1}
A unit vector field $X$ on $(M,g)$ is a magnetic map into $UM$ if and only if 
\begin{equation}\label{Meq}
\mathrm{tr}_{g}R(\nabla_\cdot X,X)\cdot=q\nabla_{X}X,
\ \ 
\overline{\Delta}_{g}X=|\nabla X|^2\,X.
\end{equation}
\end{theorem}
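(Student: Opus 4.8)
The plan is to specialize the general magnetic-map equation $\tau_{_1}(X)=q\,\phi(X_*X)$ — which is the definition of a magnetic map into $UM$ equipped with the magnetic field $F_U$ with Lorentz force $\phi$ — to the case of a unit vector field $X:M\to UM$, and then decompose the resulting identity into its horizontal and tangential components. All the ingredients are already in place in the excerpt: the formula for $\tau_{_1}(X)$, the fundamental formula \eqref{eq:X*Y} for $X_{*p}Y_p$, and the splitting $J V=\phi V+\eta(V)\Vec{U}$. So the proof is essentially an assembly of these pieces, together with one genuinely substantive observation about the tangential component.

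\emph{Step 1 (compute $\phi(X_*X)$).} Apply \eqref{eq:X*Y} with $Y=X$ to get $X_{*}X = X^{\mathsf h}_X + (\nabla_X X)^{\mathsf v}_X$, hence $J(X_*X)_X = X^{\mathsf v}_X - (\nabla_X X)^{\mathsf h}_X$. Rewrite the vertical term using $Y^{\mathsf v}_u = Y^{\mathsf t}_u + g(u,Y_p)\Vec{U}_u$ and the fact that $|X|=1$; since $\eta(X_*X)_X = \eta(X^{\mathsf h}_X)=g(X,X)=1$, comparing with $J V=\phi V+\eta(V)\Vec U$ isolates $\phi(X_*X)_X = X^{\mathsf t}_X - (\nabla_X X)^{\mathsf h}_X$. (Here one uses that $\nabla_X X \perp X$ since $|X|$ is constant, so $(\nabla_X X)^{\mathsf h}$ is already tangent to $UM$; a horizontal lift is always tangent to $UM$ anyway.)

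\emph{Step 2 (split the Euler--Lagrange equation).} Insert the formula for $\tau_{_1}(X)$ and the expression from Step 1 into $\tau_{_1}(X)=q\,\phi(X_*X)$. Since $\mathcal H\oplus\mathcal V$ (equivalently, horizontal lifts versus tangential lifts on $UM$) is a direct-sum decomposition pointwise, the equation splits into a horizontal part, $\operatorname{tr}_g R(\nabla_\cdot X,X)\cdot = q\,\nabla_X X$ (after cancelling the overall sign), and a tangential part, $(\overline\Delta_g X)^{\mathsf t}_X = -q\,X^{\mathsf t}_X$.

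\emph{Step 3 (analyze the tangential part — the main point).} Expand both tangential lifts via $Y^{\mathsf t}_u = Y^{\mathsf v}_u - g(u,Y)\Vec U_u$, so the tangential equation becomes $(\overline\Delta_g X)^{\mathsf v}_X - g(X,\overline\Delta_g X)\Vec U_X = -q\,X^{\mathsf v}_X + q\,g(X,X)\Vec U_X$. Because $\mathsf v$ is a linear isomorphism onto $\mathcal V$ and $\Vec U_X = X^{\mathsf v}_X$, this is equivalent to the equation in $T_pM$: $\overline\Delta_g X - g(X,\overline\Delta_g X)\,X = -q\big(X - g(X,X)X\big)$. The right-hand side vanishes identically since $g(X,X)=1$. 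Hence $\overline\Delta_g X = g(X,\overline\Delta_g X)\,X$, i.e.\ $\overline\Delta_g X$ is pointwise parallel to $X$; taking the inner product with $X$ and using the standard identity $g(X,\overline\Delta_g X)=|\nabla X|^2$ for unit $X$ (which follows from $0=\tfrac12\overline\Delta_g g(X,X)=g(X,\overline\Delta_g X)-|\nabla X|^2$) gives $\overline\Delta_g X=|\nabla X|^2 X$. The converse direction simply runs the same equivalences backwards. \textbf{The step I expect to carry the real content} is recognizing that the scalar $q$ drops out of the tangential equation entirely — the tangential projection of $X^{\mathsf v}_X$ is zero because $X$ is unit — so that the magnetic condition on a unit vector field reduces, in its ``vertical'' half, to exactly the harmonic-unit-vector-field equation \eqref{eq:HUVT}, with all of the magnetic information surviving only in the horizontal half $\operatorname{tr}_g R(\nabla_\cdot X,X)\cdot = q\,\nabla_X X$.
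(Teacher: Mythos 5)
Your proposal is correct and follows essentially the same route as the paper: compute $\phi(X_*X)$ from \eqref{eq:X*Y} and the splitting $JV=\phi V+\eta(V)\Vec{U}$, separate the equation $\tau_{_1}(X)=q\,\phi(X_*X)$ into horizontal and tangential parts, and observe that $X^{\mathsf t}_X=X^{\mathsf v}_X-g(X,X)\Vec{U}_X=0$ for unit $X$, so the charge $q$ disappears from the tangential equation, leaving exactly $\overline{\Delta}_gX=g(X,\overline{\Delta}_gX)X=|\nabla X|^2X$. Your explicit justification of $g(X,\overline{\Delta}_gX)=|\nabla X|^2$ is a detail the paper leaves implicit, and your sign in the horizontal equation matches the theorem as stated.
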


\subsection{}
Next, we consider a 
unit vector field preserving variation $\{X^{(s)}\}$ through 
a unit vector field $X$. About Dirichlet energy the following result is 
known.

\begin{theorem}[\cite{HY,CMWood, Wiegmink}]
A unit vector field $X$ on an oriented Riemannian manifold $(M,g)$ 
is a critical point of the Dirichlet energy with respect to 
compactly supported variations in $\mathfrak{X}_{1}(M)$ if and only if 
\[
\overline{\Delta}_{g}X=|\nabla X|^2\,X.
\]
\end{theorem}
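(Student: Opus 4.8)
The plan is to compute the first variation of the Dirichlet energy $E(X^{(s)};\mathsf{D})$ along a \emph{unit}-vector-field-preserving variation $\{X^{(s)}\}\subset\mathfrak{X}_{1}(M)$ and extract the Euler--Lagrange equation, exactly in parallel with the computation already carried out for $\mathrm{LH}$ on $\mathfrak{X}(M)$, but now with the tangential lift replacing the vertical lift and with the constraint $|X|\equiv 1$ taken into account. First I would observe that, since each $X^{(s)}$ is a unit vector field, its tangential vector field $V_p=\tfrac{d}{ds}\big|_{s=0}X^{(s)}(p)$ satisfies $g(V,X)=0$ pointwise (differentiate $g(X^{(s)},X^{(s)})\equiv 1$). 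Next, using the regularity of $X$ as an immersion into $UM$, the relation between the variational vector field $\mathscr V$ and the tangential vector field is $\mathscr V=V^{\mathsf t}\circ X$ (the unit-bundle analogue of $\mathscr V=V^{\mathsf v}\circ X$, see \cite[p.~58]{DP}), where $\mathsf t$ is the tangential lift introduced above.

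The main computation then runs as follows. The first variation formula gives
\[
\frac{d}{ds}\biggr\vert_{s=0}E(X^{(s)};\mathsf{D})
=-\int_{\mathsf{D}}(X^{*}g_{_S})(\mathscr V,\tau_{_1}(X))\,dv_g .
\]
Substituting $\mathscr V=V^{\mathsf t}\circ X$ and the expression for $\tau_{_1}(X)$ recalled just above, and using that $g_{_S}$ makes horizontal and tangential lifts orthogonal (so the $\mathrm{tr}_g R(\nabla_\cdot X,X)\cdot$ term drops out when paired against $V^{\mathsf t}$), we are left with
\[
\frac{d}{ds}\biggr\vert_{s=0}E(X^{(s)};\mathsf{D})
=\int_{\mathsf{D}} g_{_S}\!\left(V^{\mathsf t}\circ X,(\overline\Delta_g X)^{\mathsf t}\circ X\right)dv_g .
\]
Now I would expand $g_{_S}(V^{\mathsf t},W^{\mathsf t})$ at a point $(p;X_p)$: since $Y^{\mathsf t}=Y^{\mathsf v}-g(X,Y)\vec U$ and $g_{_S}(Y^{\mathsf v},Z^{\mathsf v})=g(Y,Z)$, $g_{_S}(Y^{\mathsf v},\vec U)=g(Y,X)$, $g_{_S}(\vec U,\vec U)=1$, one gets $g_{_S}(V^{\mathsf t},W^{\mathsf t})=g(V,W)-g(X,V)g(X,W)$. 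Applying this with $W=\overline\Delta_g X$ and using $g(V,X)=0$ yields the integrand $g(V,\overline\Delta_g X)$. Hence
\[
\frac{d}{ds}\biggr\vert_{s=0}E(X^{(s)};\mathsf{D})=\int_{\mathsf{D}} g\!\left(V,\overline\Delta_g X\right)dv_g ,
\]
where $V$ ranges over all compactly supported vector fields orthogonal to $X$.

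The conclusion is then a matter of interpreting this variational identity correctly, and this is the one point that needs care rather than brute force: because $V$ is constrained to be pointwise orthogonal to $X$, vanishing of the first variation forces only that the component of $\overline\Delta_g X$ orthogonal to $X$ vanishes, i.e. $\overline\Delta_g X-g(X,\overline\Delta_g X)X=0$, equivalently $\overline\Delta_g X=g(X,\overline\Delta_g X)X$. Finally I would identify the proportionality factor: from $|X|^2=1$ one has $0=\overline\Delta_g(|X|^2)$, and the Bochner-type identity $\tfrac12\overline\Delta_g(|X|^2)=g(\overline\Delta_g X,X)-|\nabla X|^2$ gives $g(\overline\Delta_g X,X)=|\nabla X|^2$. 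Therefore the Euler--Lagrange equation is $\overline\Delta_g X=|\nabla X|^2 X$, which is \eqref{eq:HUVT}. The only genuine subtlety is the passage from ``the integral vanishes for all $V\perp X$'' to the pointwise equation (one must exhibit enough such $V$, e.g. localize and use that $\{V:g(V,X)=0\}$ spans the orthogonal complement of $X$ at each point), together with the standard verification that every variation in $\mathfrak X_1(M)$ produces such a $V$ and, conversely, that every such $V$ is realized by some variation (e.g. $X^{(s)}=\cos(s|V|)X+\sin(s|V|)\,V/|V|$ where $V\neq0$), so that no critical directions are missed.
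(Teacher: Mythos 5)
Your proposal is correct and follows essentially the same route as the paper: the paper cites the first variation formula $\frac{d}{ds}\big\vert_{s=0}E(X^{(s)};\mathsf D)=\int_{\mathsf D}g(V,\overline{\Delta}_gX)\,dv_g$ and then, in the proof of Theorem~\ref{Th:4.3}, uses exactly your decomposition argument (orthogonal component of $\overline{\Delta}_gX$ vanishes against test fields $V\perp X$, scalar factor identified via $g(\overline{\Delta}_gX,X)=|\nabla X|^2$). The only cosmetic differences are that you derive the first variation from $\tau_{_1}(X)$ and the tangential lift rather than quoting it, and you realize a prescribed $V\perp X$ by the variation $\cos(s|V|)X+\sin(s|V|)V/|V|$ instead of the paper's normalized variation $(X+sV)/|X+sV|$ from Lemma~\ref{Lem-4.1}; both are valid.
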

A unit vector field $X$ satisfying $\overline{\Delta}_{g}X=|\nabla X|^2\,X$ is 
referred as to a {\emph{harmonic unit vector field}} \cite{DP}.

In order to compute the first variation for the Landau-Hall functional, let us
make some remarks. Using \eqref{eq:X*Y} we write
$X^{(s)}_*X^{(0)}=(X^{(0)})^\mathsf{h}_{X^{(s)}}+(\nabla_{X^{(0)}}X^{(s)})^\mathsf{v}_{X^{(s)}}$.
But we have also
$g_{_S}\left(\Vec{U},(\nabla_{X^{(0)}}X^{(s)})^\mathsf{v}_{X^{(s)}}\right)=0$, which means that 
$(\nabla_{X^{(0)}}X^{(s)})^\mathsf{v}_{X^{(s)}}=(\nabla_{X^{(0)}}X^{(s)})^\mathsf{t}_{X^{(s)}}$.
As a consequence,
$\eta_{_{X^{(s)}}}(X^{(s)}_{*}X^{(0)})=\eta_{_{X^{(s)}}}\big((X^{(0)})^\mathsf{h}_{X^{(s)}}\big)=g(X^{(s)},X)$.
Thus the variation of the magnetic term is given by
\begin{equation}
\label{eq:first-eta}
\frac{d}{ds}\biggr\vert_{s=0}\int_{\mathsf{D}}\eta({X^{(s)}}_{*}X^{(0)})\,dv_g=\int_{\mathsf{D}}g(V,X)dv_g.
\end{equation}
Here $V$ is the vector field on $D$ previously defined by
$V(p)=\lim\limits_{s\to0}\frac{1}{s}\big(X^{(s)}(p)-X(p)\big)$.

We will prove the following result.
\begin{theorem} 
\label{Th:4.3}
A unit vector field $X$ on an oriented Riemannian manifold $(M,g)$ 
is a critical point of the Landau-Hall functional under 
compact support variations in $\mathfrak{X}_{1}(M)$ 
if and only if it is a critical point 
of the Dirichlet energy 
under compact support variations in $\mathfrak{X}_{1}(M)$.
\end{theorem}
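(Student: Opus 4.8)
The plan is to compute the first variation of the Landau-Hall functional $\mathrm{LH}(X^{(s)};\mathsf{D}) = E(X^{(s)};\mathsf{D}) + q\int_{\mathsf{D}} \eta((X^{(s)})_* X^{(0)})\,dv_g$ along a unit-vector-field-preserving variation $\{X^{(s)}\}$ through $X$, and to show that the magnetic term contributes nothing to the Euler--Lagrange equation. The key point, which makes this theorem true (and which contrasts sharply with the non-unit case of Theorem 3.6), is the \emph{unit} constraint: the tangential vector field $V$ of the variation satisfies $g(V,X)=0$ pointwise, since differentiating $g(X^{(s)},X^{(s)})\equiv 1$ at $s=0$ gives $2g(V,X)=0$.

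First I would recall from the excerpt that the first variation of the Dirichlet energy on $\mathfrak{X}_1(M)$ is $\frac{d}{ds}\big\vert_{s=0} E(X^{(s)};\mathsf{D}) = \int_{\mathsf{D}} g(V, \overline{\Delta}_g X)\,dv_g$, which follows by the same pull-back computation used for $C^\infty(M,TM)$ but now projecting onto $T(UM)$; since $V^{\mathsf v}\circ X$ decomposes into its $\Vec U$-component (killed by the horizontal part of $\tau_1(X)$) and its tangential lift $V^{\mathsf t}\circ X$, and since $g(V,X)=0$ makes $V^{\mathsf v}=V^{\mathsf t}$, one recovers exactly $g(V,\overline{\Delta}_g X)$. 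Next I would invoke formula \eqref{eq:first-eta}, already established in the excerpt, which says $\frac{d}{ds}\big\vert_{s=0}\int_{\mathsf{D}} \eta((X^{(s)})_* X^{(0)})\,dv_g = \int_{\mathsf{D}} g(V,X)\,dv_g$. Here is where the constraint does its work: the integrand $g(V,X)$ vanishes identically, so the magnetic term has \emph{zero} first variation for every admissible variation.

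Putting these together, $\frac{d}{ds}\big\vert_{s=0}\mathrm{LH}(X^{(s)};\mathsf{D}) = \int_{\mathsf{D}} g(V,\overline{\Delta}_g X)\,dv_g + q\cdot 0 = \int_{\mathsf{D}} g(V,\overline{\Delta}_g X)\,dv_g$, which is precisely the first variation of the Dirichlet energy on $\mathfrak{X}_1(M)$. Hence $X$ is $\mathrm{LH}$-critical in $\mathfrak{X}_1(M)$ if and only if it is $E$-critical in $\mathfrak{X}_1(M)$; by the cited theorem of Han--Yim, Wood and Wiegmink this happens exactly when $\overline{\Delta}_g X = |\nabla X|^2 X$, i.e. $X$ is a harmonic unit vector field. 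I expect the only genuine subtlety — the ``main obstacle'' — to be the bookkeeping in the Dirichlet first-variation step: one must be careful that the variational vector field $\mathscr{V}$ is a section of $X^*T(UM)$ and equals $V^{\mathsf t}\circ X$ (not merely $V^{\mathsf v}\circ X$), and that the pairing of $V^{\mathsf t}\circ X$ against $\tau_1(X)$, whose vertical part is $(\overline{\Delta}_g X)^{\mathsf t}$, reproduces $g(V,\overline{\Delta}_g X)$ using $g(V,X)=0$; once that is in place the magnetic contribution vanishes for the structural reason above and the equivalence is immediate.
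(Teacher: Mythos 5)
Your proof is correct and follows essentially the same route as the paper: both rest on the first-variation formula $\mathrm{LH}'(0)=\int_{\mathsf D}g(V,\overline{\Delta}_gX)\,dv_g+q\int_{\mathsf D}g(V,X)\,dv_g$ (via \eqref{eq:first-eta}) together with the pointwise constraint $g(V,X)=0$ obtained by differentiating $g(X^{(s)},X^{(s)})\equiv 1$. The only difference is cosmetic: you observe at once that the magnetic term vanishes identically, so the two first variations coincide as functionals of the variation, whereas the paper carries the term $qX$ along and uses the decomposition $\overline{\Delta}_gX+qX=\lambda X+V$ together with the converse part of Lemma~\ref{Lem-4.1} to extract the common Euler--Lagrange equation $\overline{\Delta}_gX=|\nabla X|^2X$.
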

Before giving a proof, it seems to be better to 
add some helpful remarks.
\begin{lemma}
\label{Lem-4.1}
Let $X\in\mathfrak{X}_{1}(M)$ be a unit vector field 
and $X^{(s)}$ be a variation of $X$ through unit vector fields. 
Then, the vector field $V$ obtained from the variation
$X^{(s)}$ of $X$ is orthogonal to $X$.
Conversely, let $V$ be a vector field orthogonal to $X$. Then there exits a 
variation $X^{(s)}$ of $X$ through unit vector fields whose variational vector field leads to $V$.
\end{lemma}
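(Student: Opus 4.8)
The plan is to prove Lemma~\ref{Lem-4.1} directly, since it is the technical heart needed for Theorem~\ref{Th:4.3}, and then indicate briefly how the theorem follows.

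\textbf{Forward direction.} Suppose $X^{(s)}$ is a variation of $X$ through unit vector fields, so $g(X^{(s)}_p,X^{(s)}_p)=1$ for all $s$ and all $p$. I would differentiate this identity with respect to $s$ at $s=0$. Since the tangential vector field is $V_p=\frac{d}{ds}\big|_{s=0}X^{(s)}(p)\in T_pM$, and since $g$ is bilinear and $s$-independent, differentiating gives $0=\frac{d}{ds}\big|_{s=0}g(X^{(s)},X^{(s)})=2g(V,X)$. Hence $g(V,X)=0$, i.e.\ $V\perp X$. This is the easy half.

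\textbf{Converse direction.} Given $V\in\mathfrak{X}(M)$ with $g(V,X)=0$ pointwise, I need to exhibit an actual variation $X^{(s)}$ through unit vector fields with tangential vector field $V$. The natural construction is to move along great circles in each unit tangent sphere: set, at each $p$,
\[
X^{(s)}_p=\cos\big(s\,|V_p|\big)\,X_p+\frac{\sin\big(s\,|V_p|\big)}{|V_p|}\,V_p
\]
where the quotient is interpreted as $s$ at points with $V_p=0$ (so the expression is smooth everywhere, being $X_p\cos(s|V_p|)+V_p\,\mathrm{sinc}(s|V_p|)$ with the entire analytic function $\mathrm{sinc}$). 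One checks $g(X^{(s)}_p,X^{(s)}_p)=\cos^2(s|V_p|)+\sin^2(s|V_p|)\,\frac{|V_p|^2}{|V_p|^2}=1$ using $g(V,X)=0$, so $X^{(s)}\in\mathfrak{X}_1(M)$ for every $s$; moreover $X^{(0)}=X$. Differentiating in $s$ at $s=0$ yields $\frac{d}{ds}\big|_{s=0}X^{(s)}_p=-|V_p|\sin(0)X_p+\cos(0)V_p=V_p$, so the tangential vector field is exactly $V$. A routine smoothness check on the $(-\varepsilon,\varepsilon)\times M\to UM$ map completes this half. Alternatively, and perhaps more cleanly, one can write $X^{(s)}=\exp^{\mathbb{S}}_{X}(sV)$ using the fibrewise exponential map of the unit sphere, which is smooth in all arguments and reduces the verification to standard facts about geodesics on round spheres.

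\textbf{Main obstacle.} The only delicate point is smoothness of $X^{(s)}$ at points where $V_p=0$: there the naive formula $\tfrac{\sin(s|V_p|)}{|V_p|}V_p$ looks singular, and $|V_p|$ itself is generally not smooth where $V_p$ vanishes. I would handle this by noting that $\tfrac{\sin(t)}{t}=\mathrm{sinc}(t)$ is an even entire function, hence $\mathrm{sinc}(s|V_p|)=h(s^2 g(V_p,V_p))$ for an entire function $h$, and $g(V,V)$ is smooth in $p$; thus $X^{(s)}_p=\cos\!\big(s\sqrt{g(V_p,V_p)}\big)X_p+h\big(s^2g(V_p,V_p)\big)V_p$ — but $\cos$ of the square root still needs the same even-function trick, $\cos(t)=k(t^2)$ with $k$ entire, giving finally $X^{(s)}_p=k\big(s^2g(V_p,V_p)\big)X_p+s\,h\big(s^2g(V_p,V_p)\big)V_p$, manifestly smooth in $(s,p)$. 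Everything else is the straightforward bilinearity/orthogonality computation above.

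\textbf{Deduction of Theorem~\ref{Th:4.3}.} Once Lemma~\ref{Lem-4.1} is available, the first variation of the Dirichlet energy on $\mathfrak{X}_1(M)$ is (by the computation already carried out in the excerpt) $\int_{\mathsf{D}}g(V,\overline{\Delta}_g X)\,dv_g$, and by \eqref{eq:first-eta} the first variation of the magnetic term is $\int_{\mathsf{D}}g(V,X)\,dv_g$; but $V\perp X$ by the lemma, so the magnetic term contributes nothing, and $\frac{d}{ds}\big|_{s=0}\mathrm{LH}(X^{(s)};\mathsf{D})=\frac{d}{ds}\big|_{s=0}E(X^{(s)};\mathsf{D})$ for every admissible variation. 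The converse half of the lemma guarantees that the admissible $V$'s are exactly the sections of $X^\perp$, so the two functionals have literally the same critical point condition on $\mathfrak{X}_1(M)$, namely $\overline{\Delta}_g X=|\nabla X|^2 X$.
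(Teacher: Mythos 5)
Your proof of Lemma~\ref{Lem-4.1} is correct, but the converse half takes a genuinely different route from the paper. The forward direction (differentiating $g(X^{(s)},X^{(s)})=1$) is identical. For the converse, you move along fibrewise great circles, $X^{(s)}_p=\cos(s|V_p|)X_p+\tfrac{\sin(s|V_p|)}{|V_p|}V_p$, whereas the paper normalizes the linear interpolation: it sets $W^{(s)}=X+sV$ and $X^{(s)}=W^{(s)}/f(s)$ with $f(s)=|W^{(s)}|=\sqrt{1+s^2|V|^2}$, then uses $f(0)=1$, $f'(0)=0$ to conclude that the tangential vector field is $V$. The paper's construction is manifestly smooth (the quantity $1+s^2|V|^2$ is smooth and strictly positive everywhere), so it sidesteps entirely the regularity issue at zeros of $V$ that forces you into the $\mathrm{sinc}$/even-entire-function argument; your handling of that issue is correct, but it is extra work the normalization trick avoids. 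What your construction buys in exchange is that each curve $s\mapsto X^{(s)}_p$ is an actual unit-speed-reparametrized geodesic of the round fibre sphere, which is conceptually natural if one wants second-variation formulas later, but is not needed here. Your closing deduction of Theorem~\ref{Th:4.3} -- that $\int_{\mathsf D}g(V,qX)\,dv_g=0$ because $V\perp X$ pointwise, so the Landau--Hall and Dirichlet first variations coincide on all admissible variations -- is a slightly more direct packaging of the same argument the paper spreads over its subsequent two lemmas and the proof of the theorem.
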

\begin{proof}
Since $X^{(s)}$ is a variation of $X$ through unit vector fields, that is 
$g(X^{(s)},X^{(s)})=1$, we immediately obtain
$0=\frac{d}{ds}\biggr\vert_{s=0}g(X^{(s)},X^{(s)})=2g(V,X)$.
See also \cite[p.~63]{DP}.
Hence the conclusion.

To prove the converse, we set 
$W^{(s)}=X+sV$ and $X^{(s)}=\frac{1}{f(s)}W^{(s)}$, where $f(s)=|W^{(s)}|$.

Obviously, $f(0)=1$ and  
$$
f(s)^2=|W^{(s)}|^2=1+s^2|V|^2.
$$
This implies $f'(0)=0$. Consequently, the variational vector field of $W^{(s)}$ is
$$
\frac{d}{ds}\biggr\vert_{s=0}X^{(s)}=\Big(-\frac{f'(s)}{f(s)^2}W^{(s)}+\frac{1}{f(s)}\frac{d}{ds}W^{(s)}\Big)_{s=0}=V.
$$
We conclude that $X^{(s)}$ is a required variation. (See e.g. \cite[p.~65]{DP}.) 
\end{proof}
\begin{lemma}
Let $X\in\mathfrak{X}_{1}(M)$ be a unit vector field 
and $X^{(s)}$ be a variation of $X$ through unit vector fields whose variational vector field is $V$.
Let 
$$
{\mathrm{LH}}(s):={\mathrm{LH}}(X^{(s)};\mathsf{D})=E(X^{(s)};\mathsf{D})+
  \displaystyle q\int_\mathsf{D}\eta_{_{X^{(s)}}}(X^{(s)}_{*}X^{(0)})dv_g
$$
be the Landau-Hall functional under compact support variations in $\mathfrak{X}_{1}(M)$.  
Then
$$
{\mathrm{LH}}'(0)=\int_{\mathsf{D}}g(V,\overline{\Delta}_gX+qX)dv_g.
$$
\end{lemma}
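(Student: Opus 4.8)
The plan is to assemble the first variation of $\mathrm{LH}(X^{(s)};\mathsf{D})$ from the two pieces already computed in the excerpt. First I would recall that the Landau-Hall functional splits as $\mathrm{LH}(s)=E(X^{(s)};\mathsf{D})+q\int_{\mathsf{D}}\eta_{_{X^{(s)}}}(X^{(s)}_{*}X^{(0)})\,dv_g$, so by linearity of $\tfrac{d}{ds}\bigr\vert_{s=0}$ it suffices to differentiate each summand separately and add.

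For the Dirichlet term, I would invoke the computation carried out earlier in \S 3.6 (the LH-critical vector fields subsection), which, using $\mathscr{V}=V^{\mathsf v}\circ X$ together with the expression for $\tau(X)$ and the fact that horizontal and vertical lifts are $g_{_S}$-orthogonal, gives
\[
\frac{d}{ds}\biggr\vert_{s=0}E(X^{(s)};\mathsf{D})=\int_{\mathsf{D}}g(V,\overline{\Delta}_gX)\,dv_g.
\]
The only subtlety relative to the unrestricted case $\mathfrak{X}(M)$ is that here the variations live in $\mathfrak{X}_1(M)$, hence $V$ is $g$-orthogonal to $X$ by the first half of Lemma~\ref{Lem-4.1}; but this restriction does not change the formula for the first variation, since the derivation of $\tfrac{d}{ds}\bigr\vert_{s=0}E$ nowhere used that the variation exhausts $\mathfrak{X}(M)$ — it used only that each $X^{(s)}$ is a vector field, which remains true. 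For the magnetic term I would simply quote equation \eqref{eq:first-eta}, already established in the excerpt, namely $\tfrac{d}{ds}\bigr\vert_{s=0}\int_{\mathsf{D}}\eta(X^{(s)}_{*}X^{(0)})\,dv_g=\int_{\mathsf{D}}g(V,X)\,dv_g$.

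Adding the two contributions yields
\[
\mathrm{LH}'(0)=\int_{\mathsf{D}}g(V,\overline{\Delta}_gX)\,dv_g+q\int_{\mathsf{D}}g(V,X)\,dv_g=\int_{\mathsf{D}}g(V,\overline{\Delta}_gX+qX)\,dv_g,
\]
which is the claim. I do not anticipate a genuine obstacle here — the lemma is essentially a bookkeeping step that repackages the two first-variation formulas already derived in the preceding pages. If anything requires a word of care, it is making explicit that the passage from the $\mathfrak{X}(M)$-computation of $\tfrac{d}{ds}\bigr\vert_{s=0}E$ to the $\mathfrak{X}_1(M)$-setting is legitimate, i.e. that restricting the admissible variations does not alter the integrand of the first variation but only restricts which $V$ can occur (those with $g(V,X)=0$); this is exactly what Lemma~\ref{Lem-4.1} is set up to handle, and it is why this lemma is stated first. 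The real content — turning $\mathrm{LH}'(0)=0$ into the Euler-Lagrange equation and thence into the equivalence with the Dirichlet-critical condition asserted in Theorem~\ref{Th:4.3} — is deferred to the proof of that theorem, where the orthogonality constraint on $V$ forces one to project $\overline{\Delta}_gX+qX$ onto $X^{\perp}$ and recover $\overline{\Delta}_gX=|\nabla X|^2X$.
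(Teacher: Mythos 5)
Your proposal is correct and follows essentially the same route as the paper: both split $\mathrm{LH}$ into the Dirichlet term plus the magnetic term, take the known first variation $\int_{\mathsf{D}}g(V,\overline{\Delta}_gX)\,dv_g$ for the former (the paper cites Wiegmink/Wood/Dragomir--Perrone, while you rederive it from the earlier $\mathfrak{X}(M)$ computation, correctly noting that restricting to unit variations only constrains $V$ without changing the integrand), and add the already-established variation $\int_{\mathsf{D}}g(V,X)\,dv_g$ of the magnetic term. No gaps.
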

\begin{proof}
The first variation of the Dirichlet energy is given by
\cite{Wiegmink, CMWood} (see also \cite[p.~65]{DP}):
\[
\frac{d}{ds}\biggr\vert_{s=0}
E(X^{(s)};\mathsf{D})
=\int_{\mathsf{D}}g(V,\overline{\Delta}_gX)dv_g.
\]
Combining with \eqref{eq:first-eta} we obtain that the first variation formula of the Landau-Hall functional 
is given by
\[
\frac{d}{ds}\biggr\vert_{s=0}
\mathrm{LH}(X^{(s)};\mathsf{D})
=
\int_{\mathsf{D}}
g(V,\overline{\Delta}_gX+qX)dv_g.
\]
\end{proof}
\begin{lemma} 
Let $X$ be a unit vector field and $V$ a vector field 
orthogonal to $X$. Assume that $X$ is a critical 
point of the Landau-Hall functional under 
unit vector field preserving variations. 
Then we have
\[
\int_{\mathsf D}
g(V,\overline{\Delta}_gX+qX)dv_g=0.
\]
\end{lemma}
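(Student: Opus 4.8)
The plan is simply to chain together the three preceding lemmas; the statement records the ``only if'' half of Theorem~\ref{Th:4.3}, in the form that criticality forces the $L^2$-pairing of $\overline{\Delta}_gX+qX$ against every admissible variational direction to vanish, and the admissible directions are precisely the vector fields orthogonal to $X$.

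First I would fix an arbitrary vector field $V$ orthogonal to $X$, taken with compact support inside $\mathsf{D}$ (compact support is what ``unit vector field preserving variations'' tacitly requires, so that the variation is admissible and all the earlier integrations by parts are legitimate). Applying the converse part of Lemma~\ref{Lem-4.1}, I obtain a variation $X^{(s)}=\frac{1}{f(s)}\bigl(X+sV\bigr)$ of $X$ \emph{through unit vector fields} whose variational vector field is exactly $V$; since $V$ vanishes outside $\mathsf{D}$, so does $X^{(s)}-X$, so this is an admissible compactly supported variation in $\mathfrak{X}_1(M)$.

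Next I would invoke the preceding lemma (the first-variation formula for $\mathrm{LH}$ on $\mathfrak{X}_1(M)$), which applied to this variation gives
\[
\mathrm{LH}'(0)=\int_{\mathsf D} g\bigl(V,\overline{\Delta}_gX+qX\bigr)\,dv_g.
\]
Finally, since $X$ is by hypothesis a critical point of the Landau--Hall functional under unit vector field preserving variations, the left-hand side is zero, which is the asserted identity.

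I do not expect a real obstacle here: the only point requiring a moment's care is that the $V$ fed into the construction is genuinely realized as the variational vector field of a variation lying in $\mathfrak{X}_1(M)$ with support still contained in $\mathsf{D}$ — but this is exactly what Lemma~\ref{Lem-4.1} provides, and the explicit normalization $X^{(s)}=(X+sV)/|X+sV|$ both preserves unit length and does not enlarge the support. So the argument is a short sequence of appeals to facts already in hand.
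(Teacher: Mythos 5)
Your argument is correct and is exactly the paper's proof: take the normalized variation $X^{(s)}=(X+sV)/|X+sV|$ from Lemma~\ref{Lem-4.1}, substitute it into the first-variation formula of the preceding lemma, and invoke criticality. The extra remark about compact support is a harmless (and reasonable) precision.
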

\begin{proof}
It is enough to choose $X^{(s)}=W^{(s)}/|W^{(s)}|$ (see Lemma~\ref{Lem-4.1}) in the 
variational formula obtained in the previous Lemma.
\end{proof}
\begin{proof}[Proof of the  {\bf Theorem}~{\rm \ref{Th:4.3}}]
Let $X$ be a magnetic unit vector field with charge $q$. Then,
from the first variational formula, we deduce that 
\[
\int_\mathsf{D}g(V,\overline{\Delta}_gX+qX)dv_g=0
\]
for all vector fields $V$ orthogonal to $X$.

Let us decompose the tangent space $T_{p}M$ as 
$T_{p}M=\mathbb{R}X_{p}\oplus
(\mathbb{R}X_{p})^{\perp}$.
Accordingly, we consider 
\[
\overline{\Delta}_{g}X+qX=\lambda X+V,
\] 
for some smooth function $\lambda$. This equation implies
$g(\overline{\Delta}_{g}X+qX,V)=|V|^2$.
Thus we get
\[
\int_{\mathsf{D}} |V|^2dv_g=
\int_{\mathsf{D}}g(\overline{\Delta}_{g}X+qX,V)dv_g=0.
\]
Hence $V=0$. Thus $\overline{\Delta}_{g}X+qX=\lambda X$. 
From this we have
\[
\lambda=g(\overline{\Delta}_g X+qX,X)=|\nabla X|^2+q,
\]
which implies
\[
\overline{\Delta}_{g}X=|\nabla X|^2~X.
\]

Conversely, assume that $X$ satisfies $\overline{\Delta}_{g}X=|\nabla X|^2X$.
Take a variation $X^{(s)}$ with variational vector field 
$V$, which is orthogonal to $X$. See Lemma~\ref{Lem-4.1}.
We compute
\[
\frac{d}{ds}\biggr\vert_{s=0}\mathrm{LH}(X^{(s)})
=\int_{\mathsf D}g(\overline{\Delta}_g X+qX,V)\,dv_g
=\int_{\mathsf D}g((|\nabla X|^2+q) X,V)\,dv_g=0.
\]
Hence $X$ is a critical point of the Landau-Hall
functional under unit vector field preserving 
variations. 
\end{proof}

It should be remarked that the Euler-Lagrange equation 
of this variational problem is nothing but the 
second equation of magnetic map equations (4.1) for 
unit vector fields. Therefore, we state the following.
\begin{corollary} A unit vector field 
$X$ is a magnetic map into $UM$ with charge $q$ if and only if it 
is a critical point of the Landau-Hall functional under 
compact support variations in $\mathfrak{X}_{1}(M)$ and in addition satisfies 
$\mathrm{tr}_{g}R(\nabla_\cdot X,X)\cdot=q\nabla_{X}X$.
\end{corollary}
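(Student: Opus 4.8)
The plan is to deduce this directly from Theorem~\ref{thm4.1} and Theorem~\ref{Th:4.3}, with no new computation. By Theorem~\ref{thm4.1}, a unit vector field $X$ is a magnetic map into $UM$ with charge $q$ exactly when \emph{both} equations of \eqref{Meq} hold, i.e. the horizontal (curvature) equation $\mathrm{tr}_{g}R(\nabla_\cdot X,X)\cdot=q\nabla_{X}X$ together with the tangential equation $\overline{\Delta}_{g}X=|\nabla X|^{2}X$. So the only thing to check is that, among unit vector fields, the tangential equation alone characterizes the critical points of the Landau-Hall functional under compactly supported variations in $\mathfrak{X}_{1}(M)$; appending the horizontal equation then gives the stated equivalence.

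First I would invoke the theorem of Han--Yim, C.~M.~Wood and Wiegmink quoted above: a unit vector field is Dirichlet-critical through variations in $\mathfrak{X}_{1}(M)$ if and only if $\overline{\Delta}_{g}X=|\nabla X|^{2}X$. Then I would point out that the proof of Theorem~\ref{Th:4.3} establishes slightly more than its statement: the lemmas there give the first variation $\mathrm{LH}'(0)=\int_{\mathsf D}g(V,\overline{\Delta}_{g}X+qX)\,dv_g$ for every admissible variation (equivalently, every $V\perp X$, by Lemma~\ref{Lem-4.1}), and the orthogonal decomposition $\overline{\Delta}_{g}X+qX=\lambda X+V$ forces $V=0$ and $\lambda=|\nabla X|^{2}+q$ at a critical point, that is $\overline{\Delta}_{g}X=|\nabla X|^{2}X$; the converse implication is immediate from the same first-variation formula. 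Hence LH-criticality in $\mathfrak{X}_{1}(M)$ is equivalent to the tangential equation of \eqref{Meq}.

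Combining the two facts, $X$ is a magnetic map into $UM$ with charge $q$ $\iff$ the pair $\bigl[\mathrm{tr}_{g}R(\nabla_\cdot X,X)\cdot=q\nabla_{X}X\bigr]$ and $\bigl[\overline{\Delta}_{g}X=|\nabla X|^{2}X\bigr]$ holds $\iff$ $\bigl[\mathrm{tr}_{g}R(\nabla_\cdot X,X)\cdot=q\nabla_{X}X\bigr]$ and $\bigl[X$ is a critical point of the Landau-Hall functional under compactly supported variations in $\mathfrak{X}_{1}(M)\bigr]$, which is precisely the assertion. There is essentially no obstacle: all the analytic input has already been produced in the proof of Theorem~\ref{Th:4.3}. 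The only point deserving a word of care is that one should cite the equation \emph{extracted} in that proof, $\overline{\Delta}_{g}X=|\nabla X|^{2}X$, rather than the bare equivalence in the statement of Theorem~\ref{Th:4.3}, so that the residual horizontal curvature condition can be appended cleanly to recover the full system \eqref{Meq}.
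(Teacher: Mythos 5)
Your proposal is correct and follows the paper's own route: the paper likewise obtains the corollary by combining Theorem~\ref{thm4.1} with the observation (extracted from the proof of Theorem~\ref{Th:4.3}, equivalently from the quoted Han--Yim/Wood/Wiegmink characterization) that LH-criticality in $\mathfrak{X}_{1}(M)$ is exactly the second equation $\overline{\Delta}_{g}X=|\nabla X|^{2}X$ of \eqref{Meq}. Nothing further is needed.
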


Because of the Theorem~\ref{Th:4.3} we give the following definition:
A {\emph{magnetic unit vector field}} on $(M,g)$ is a unit vector field on $M$ such that it is a magnetic
map from $M$ to $U(M)$.

In the next two sections we look for explicit examples 
of unit vector fields satisfying \eqref{Meq}.

\section{Unimodular Lie groups}
\subsection{Unimodular basis}
Let $G$ be a $3$-dimensional unimodular Lie group with a left
invariant metric $g=\langle\cdot,\cdot\rangle$. Then there exists an
orthonormal basis $\{ e_1, e_2, e_3 \}$ of the Lie algebra $\mathfrak{g}$ such that 
\begin{equation}
\label{eq:unimodularbasis}
\ [e_1,e_2]=c_{3}e_{3},\quad  [e_2,e_3]=c_{1}e_{1},\quad
\ [e_3,e_1]=c_{2}e_{2}, \qquad c_{i}\in \mathbb{R}.
\end{equation}

Three-dimensional unimodular Lie groups are classified by Milnor as
follows (signature of structure constants are 
given up to numeration) \cite{Milnor}:
\begin{center}
\begin{tabular}{|c|c|c|}
\hline
   Signature
 of $(c_1,c_2,c_3)$ 
& Simply connected Lie group
& Property \\
  \hline
  $(+,+,+)$
& $\mathrm{SU}(2)$
& compact and simple \\
$(+,+,-)$ & $\widetilde{\mathrm{SL}}_{2}\mathbb{R}$ & non-compact
and simple
\\
$(+,+,0)$ & $\widetilde{\mathrm E}(2)$ & solvable
\\
$(+,-,0)$ & $\mathrm{E}(1,1)$ & solvable
\\
$(+,0,0)$ & Heisenberg group  & nilpotent
\\
$(0,0,0)$ & $(\mathbb{R}^3,+)$ & Abelian
\\
\hline
\end{tabular}
\end{center}%
Denote by the same letters also the left translated vector fields determined by $\{e_1, e_2, e_3\}$.

To describe the Levi-Civita connection $\nabla$ of $G$, we introduce 
the following constants:
\[
\mu_{i}=\frac{1}{2}(c_{1}+c_{2}+c_{3})-c_{i}.
\]
\begin{proposition}
The Levi-Civita connection is given by
\[
\begin{array}{ccc}
\nabla_{e_1}e_{1}=0, & \nabla_{e_1}e_{2}=\mu_{1}e_{3}, & \nabla_{e_1}e_{3}=-\mu_{1}e_{2}\\
\nabla_{e_2}e_{1}=-\mu_{2}e_{3}, & \nabla_{e_2}e_{2}=0, & \nabla_{e_2}e_{3}=\mu_{2}e_{1}\\
\nabla_{e_3}e_{1}=\mu_{3}e_2, & \nabla_{e_3}e_{2}=-\mu_{3}e_{1} & \nabla_{e_3}e_{3}=0.
\end{array}
\]
The Riemannian curvature $R$ is given by
\[
R(e_1,e_2)e_1=(\mu_{1}\mu_{2}-c_{3}\mu_{3})e_{2},\ \ 
R(e_1,e_2)e_2=-(\mu_{1}\mu_{2}-c_{3}\mu_{3})e_{1},\ \ 
\]
\[
R(e_2,e_3)e_2=(\mu_{2}\mu_{3}-c_{1}\mu_{1})e_{3},\ \ 
R(e_2,e_3)e_3=-(\mu_{2}\mu_{3}-c_{1}\mu_{1})e_{2},\ \ 
\]
\[
R(e_1,e_3)e_1=(\mu_{3}\mu_{1}-c_{2}\mu_{2})e_{3},\ \ 
R(e_1,e_3)e_3=-(\mu_{3}\mu_{1}-c_{2}\mu_{2})e_{1}. 
\]
The basis $\{e_1,e_2,e_3\}$ diagonalizes 
the Ricci tensor field. The principal Ricci curvatures 
are
\[
\rho_{1}=2\mu_{2}\mu_{3},
\ \ 
\rho_{2}=2\mu_{3}\mu_{1},
\ \ 
\rho_{3}=2\mu_{1}\mu_{2}.
\] 
\end{proposition}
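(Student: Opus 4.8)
The statement is classical and, up to conventions, due to Milnor \cite{Milnor}; the plan is to obtain all three assertions by a single direct computation carried out in the order connection $\to$ curvature tensor $\to$ Ricci tensor.

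First I would compute the Levi-Civita connection from the Koszul formula. Since $\{e_1,e_2,e_3\}$ is a left-invariant orthonormal frame, every inner product $\langle e_i,e_j\rangle$ is constant, so the Koszul formula collapses to
\[
2\langle\nabla_{e_i}e_j,e_k\rangle=\langle[e_i,e_j],e_k\rangle-\langle[e_j,e_k],e_i\rangle+\langle[e_k,e_i],e_j\rangle .
\]
Feeding in the structure equations \eqref{eq:unimodularbasis} for all index choices one gets, for instance, $2\langle\nabla_{e_1}e_2,e_3\rangle=c_3-c_1+c_2=2\mu_1$, whereas $\langle\nabla_{e_i}e_j,e_i\rangle=0$ (each of the three bracket terms is orthogonal to the remaining basis vector) and $\langle\nabla_{e_i}e_j,e_j\rangle=0$ by metric compatibility; in particular $\nabla_{e_i}e_i=0$. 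Reading off the components yields the connection table.

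Next I would substitute this table into $R(X,Y)Z=\nabla_X\nabla_YZ-\nabla_Y\nabla_XZ-\nabla_{[X,Y]}Z$. For example,
\[
R(e_1,e_2)e_1=\nabla_{e_1}(\nabla_{e_2}e_1)-\nabla_{e_2}(\nabla_{e_1}e_1)-\nabla_{c_3e_3}e_1=-\mu_2\nabla_{e_1}e_3-c_3\nabla_{e_3}e_1=(\mu_1\mu_2-c_3\mu_3)e_2,
\]
and the remaining listed components follow in the same way; the skew-symmetry and the pair symmetry of $R$ reduce the work to these three, and one also checks that every ``mixed'' component such as $R(e_1,e_2)e_3$ vanishes — again a consequence of $\nabla_{e_i}e_i=0$. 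Finally, tracing $R$ over the orthonormal frame gives the Ricci tensor: the vanishing of the mixed curvature components forces $\mathrm{Ric}(e_i,e_j)=0$ for $i\neq j$, so the frame diagonalizes $\mathrm{Ric}$, and
\[
\rho_1=\langle R(e_1,e_2)e_2,e_1\rangle+\langle R(e_1,e_3)e_3,e_1\rangle=-(\mu_1\mu_2-c_3\mu_3)-(\mu_1\mu_3-c_2\mu_2);
\]
using the elementary identities $c_i=\mu_j+\mu_k$ for $\{i,j,k\}=\{1,2,3\}$ (immediate from the definition of $\mu_i$) the right-hand side collapses to $2\mu_2\mu_3$, and cyclic permutation handles $\rho_2$ and $\rho_3$.

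The computation is entirely routine and presents no conceptual obstacle; the only places demanding care are the sign bookkeeping in the Koszul and curvature formulas (the Lie bracket is skew-symmetric and two of the three Koszul terms carry a minus sign) and the final algebraic simplification, which hinges on noticing $c_i=\mu_j+\mu_k$. I expect the curvature substitution to be the most tedious part, although it is shortened considerably by exploiting the symmetries of $R$ together with the vanishing of $\nabla_{e_i}e_i$.
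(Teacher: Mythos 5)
Your verification is correct and is the standard computation (Koszul formula, then the curvature identity, then tracing); the paper itself states this proposition without proof, deferring to Milnor, so there is no alternative argument to compare against. All the checked identities hold — in particular $c_3-c_1+c_2=2\mu_1$, the cancellation in $R(e_1,e_2)e_1$, and the simplification of $\rho_1$ via $c_i=\mu_j+\mu_k$ — and the vanishing of $\nabla_{e_i}e_i$ does make the mixed curvature components, hence the off-diagonal Ricci entries, vanish as you claim.
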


\begin{example}[The space $\mathrm{Nil}_3$]
{\rm 
The model space $\mathrm{Nil}_3$ 
of nilgeometry in the sense of Thurston is
realized as a nilpotent Lie group 
\[
G=\left\{
\left(
\begin{array}{ccc}
1 & x & z\\
0 & 1 & y\\
0 &  0 & 1
\end{array}
\right)\>
\biggr\vert\>x,y,z\in\mathbb{R}
\right\}
\]
equipped with left invariant metric 
$dx^2+dy^2+(dz-xdy)^2$. 
The Lie algebra $\mathfrak{g}$ of $G$ is given by
\[
\mathfrak{g}=\left\{
\left(
\begin{array}{ccc}
0 & u & w\\
0 & 0 & v\\
0 &  0 & 0
\end{array}
\right)\>
\biggr\vert\>u,v,w\in\mathbb{R}
\right\}.
\]
Take an orthonormal basis $\{e_1,e_2,e_3\}$ 
of $\mathfrak{g}$: 
\[
e_1=
\left(
\begin{array}{ccc}
0 & 0 & 1\\
0 & 0 & 0\\
0 & 0 & 0
\end{array}
\right),
\>
e_2=
\left(
\begin{array}{ccc}
0 & 1 & 0\\
0 & 0 & 0\\
0 & 0 & 0
\end{array}
\right),
\>
e_3=
\left(
\begin{array}{ccc}
0 & 0 & 0\\
0 & 0 & 1\\
0 & 0 & 0
\end{array}
\right).
\]
Then $\{e_1,e_2,e_3\}$ is a unimodular basis 
satisfying $(c_1,c_2,c_3)=(1,0,0)$. 
The left invariant vector fields determined by 
$e_1$, $e_2$ and $e_3$ are given by 
\[
e_1=\frac{\partial}{\partial z},
\ \ 
e_2=\frac{\partial}{\partial x},
\ \
e_3=\frac{\partial}{\partial y}+x\frac{\partial}{\partial z}.
\]
}
\end{example}

\begin{example}[The space $\mathrm{Sol}_3$]
{\rm 
The model space $\mathrm{Sol}_3$ 
of solvegeometry in the sense of Thurston is
realized as a solvable Lie group
\[
G=\left\{
\left(
\begin{array}{ccc}
e^{-z} & 0 & x\\
0 & e^{z} & y\\
0 &  0 & 1
\end{array}
\right)\>
\biggr\vert\>x,y,z\in\mathbb{R}
\right\}
\]
equipped with left invariant metric 
$e^{2z}dx^2+e^{-2z}dy^2+dz^2$.
The Lie algebra $\mathfrak{g}$ of $G$ is given by
\[
\mathfrak{g}=\left\{
\left(
\begin{array}{ccc}
-w & 0 & u\\
0 & w & v\\
0 &  0 & 0
\end{array}
\right)\>
\biggr\vert\>u,v,w\in\mathbb{R}
\right\}.
\]
Take an orthonormal basis $\{e_1,e_2,e_3\}$ 
of $\mathfrak{g}$:
\[
e_1=\frac{1}{\sqrt{2}}(\hat{e}_1+\hat{e}_{2}),
\ \ 
e_2=\frac{1}{\sqrt{2}}(\hat{e}_1-\hat{e}_{2}),
\ \ 
e_3=\hat{e}_3,
\]
where 
\[
\hat{e}_1=
\left(
\begin{array}{ccc}
-1 & 0 & 0\\
0 & 1 & 0\\
0 & 0 & 0
\end{array}
\right),
\>
\hat{e}_2=
\left(
\begin{array}{ccc}
0 & 0 & 0\\
0 & 0 & 1\\
0 & 0 & 0
\end{array}
\right),
\>
\hat{e}_3=
\left(
\begin{array}{ccc}
0 & 0 & 1\\
0 & 0 & 0\\
0 & 0 & 0
\end{array}
\right)
\]
Then $\{e_1,e_2,e_3\}$ is a unimodular basis 
satisfying $(c_1,c_2,c_3)=(1,-1,0)$. 
The left translated vector fields of 
$e_1$, $e_2$ and $e_3$ are
\[
e_{1}=\frac{1}{\sqrt{2}}
\left(
e^{-z}\frac{\partial}{\partial x}
+
e^{z}\frac{\partial}{\partial y}
\right),
\ \ 
e_{2}=\frac{1}{\sqrt{2}}
\left(
e^{-z}\frac{\partial}{\partial x}
-
e^{z}\frac{\partial}{\partial y}
\right),
\ \ 
e_3=\frac{\partial}{\partial z}.
\]
}
\end{example}

\subsection{Magnetic equation}

Consider a left invariant unit vector field $X=x_1e_1+x_2e_2+x_3e_3$ on $G$, where $x_1, x_2, x_3$
are constants such that $x_1^2+x_2^2+x_3^2=1$.

We have to develop the two magnetic equations 
\eqref{Meq} provided in Theorem \ref{thm4.1}.

\textbf{1.} $\mathrm{tr}_g\,R(\nabla_{\cdot}X,X)\cdot=q\nabla_XX$

The left side of the equation above can be developed as follows
\begin{equation*}
\begin{array}{rl}
\mathrm{tr}\,R(\nabla_\cdot X,X)\cdot  = &
R(\nabla_{e_1}X,X)e_1+R(\nabla_{e_2}X,X)e_2+R(\nabla_{e_3}X,X)e_3\\[2mm]
  = &
\sum\limits_{(1,2,3)}
 \left[\mu_1(\mu_3^2-\mu_2^2)+(c_3-c_2)\mu_2\mu_3\right]x_2x_3e_1,
 \end{array}
\end{equation*}
where $\sum\limits_{(1,2,3)}$ denotes the cyclic summation over the permutation $(1,2,3)$.

On the other hand, we have
$\nabla_XX=\sum\limits_{(1,2,3)}(\mu_2-\mu_3)x_2x_3e_1$.

Using that $\mu_2-\mu_3=c_3-c_2$ and $\mu_2+\mu_3=c_1$, together with other four identities obtained by cyclic permutations, 
the first magnetic equation yields
\begin{equation}
\label{uni-MEq1}
\left\{
\begin{array}{l}
\left(\mu_1c_1-\mu_2\mu_3+q\right)(c_2-c_3)x_2x_3=0,\\[2mm]
\left(\mu_2c_2-\mu_3\mu_1+q\right)(c_3-c_1)x_3x_1=0,\\[2mm]
\left(\mu_3c_3-\mu_1\mu_2+q\right)(c_1-c_2)x_1x_2=0.
\end{array} \right.
\end{equation}

\textbf{ 2.} $\overline\Delta_gX=|\nabla X|^2X$

We compute first
\begin{equation*}
\overline\Delta_gX=-\sum\limits_{i=1}^3\left(\nabla_{e_i}\nabla_{e_i}X-\nabla_{\nabla_{e_i}e_i}X\right)
=\sum\limits_{(1,2,3)}(\mu_2^2+\mu_3^2)x_1e_1
\end{equation*}
and
\begin{equation*}
|\nabla X|^2=g(X,\overline{\Delta}_gX)=x_1^2(\mu_2^2+\mu_3^2)+x_2^2(\mu_3^2+\mu_1^2)+x_3^2(\mu_1^2+\mu_2^2).
\end{equation*}
The second magnetic equation yields
\begin{equation}
\label{uni-MEq2}
\left\{
\begin{array}{l}
\left(\mu_2^2+\mu_3^2-|\nabla X|^2\right)x_1=0,\\[2mm]
\left(\mu_3^2+\mu_1^2-|\nabla X|^2\right)x_2=0,\\[2mm]
\left(\mu_1^2+\mu_2^2-|\nabla X|^2\right)x_3=0.
\end{array}\right.
\end{equation}

Discussion:

{\bf (i)} two $x_i$'s are zero, i.e. $X=\pm e_1$ or $X=\pm e_2$ or $X=\pm e_3$

Equations \eqref{uni-MEq1} and \eqref{uni-MEq2} are trivially satisfied for any strength $q$.

{\bf (ii)} only one $x_i$ is zero, let's say $x_3=0$ and $x_1,x_2\neq0$ with $x_1^2+x_2^2=1$

On one hand, the equation \eqref{uni-MEq1} leads to $(c_1-c_2)(\mu_3c_3-\mu_1\mu_2+q)=0$ and 
on the other hand, the equation \eqref{uni-MEq2} implies $\mu_1^2=\mu_2^2$.

\begin{itemize}
\item $\mu_1=\mu_2$ is equivalent to $c_1=c_2$; in this situation, both magnetic equations are satisfied for any $q$;
\item $\mu_2=-\mu_1$ is equivalent to $c_3=0$; 
 \begin{itemize}
   \item[$\star$] if $c_1\neq c_2$, then the equation \eqref{uni-MEq1} is satisfied if and only if $q=-\frac{(c_2-c_1)^2}{4}$;
   \item[$\star$] if $c_1=c_2$ then $\mu_1=\mu_2=0$ and equation \eqref{uni-MEq1} is automatically satisfied for any $q$.
 \end{itemize}
\end{itemize}

{\bf (iii)} none of $x_i$'s is zero

Now, the equation \eqref{uni-MEq2} implies $\mu_1^2=\mu_2^2=\mu_3^2$:
\begin{itemize}
\item $\mu_1=\mu_2=\mu_3$ is equivalent to $c_1=c_2=c_3$, case when the equation \eqref{uni-MEq1} is satisfied for any $q$;
\item $\mu_1=\mu_2=-\mu_3\neq0$ is equivalent to $c_1=c_2=0$ and $c_3\neq0$; in this situation the equation \eqref{uni-MEq1}
is satisfied for $q=-\frac{c_3^2}{4}$;
\item the other cases are obtained from the previous one by cyclic permutations.
\end{itemize}


\begin{theorem}
Let $G$ be a $3$-dimensional unimodular Lie group with a left invariant metric and let $\{e_1, e_2, e_3\}$ be an orthonormal
basis of the Lie algebra $\mathfrak{g}$ satisfying \eqref{eq:unimodularbasis} with $c_1\geq c_2\geq c_3$. 
Then the unit magnetic vector fields on $G$ belong to the following list
\begin{center}\rm
\begin{tabular}{|l|c|c|c|}
\Xhline{3\arrayrulewidth}   
Conditions for $c_i$ & $G$ & The set of all unit    & $q$ \\
                     &     & magnetic vector fields & \\
\Xhline{3\arrayrulewidth}   
$c_1=c_2=c_3$ ${}\neq0$ & ${\mathbb{S}}^3$ & $\mathcal{S}$ & $\forall q$ \\[1mm]
            \hfill{  ${}=0$} &  ${\mathbb{R}}^3$     & $\mathcal{S}$ & $\forall q$ \\[1mm]
\hline   
$c_1>c_2=c_3$ ${}\neq0$ & $\mathrm{SU}(2)$, $\mathrm{SL}_2{\mathbb{R}}$ & $\pm e_1$, 
									${\mathcal{S}}\cap\{e_2,e_3\}_{\mathbb{R}}$  & $\forall q$ \\[1mm]
           \hfill{   ${}=0$} &  Heisenberg group & $\pm e_1$, 
                                    ${\mathcal{S}}\cap\{e_2,e_3\}_{\mathbb{R}}$ & $\forall q$  \\[1mm]
                      &                  &                 ${\mathcal{S}}$            & $-\dfrac{c_1^2}{4}$\\[2mm]
\hline
${}0\neq$ $c_1=c_2>c_3$ & $\mathrm{SU}(2)$, $\mathrm{SL}_2{\mathbb{R}}$ & $\pm e_3$, 
                                    ${\mathcal{S}}\cap\{e_1,e_2\}_{\mathbb{R}}$& $\forall q$ \\[1mm]
                 ${}0=$ &  Heisenberg group & $\pm e_1$, 
                                    ${\mathcal{S}}\cap\{e_2,e_3\}_{\mathbb{R}}$ & $\forall q$  \\[1mm]
                      &                  &                 ${\mathcal{S}}$            & $-\dfrac{c_3^2}{4}$\\[2mm]
\hline
$c_1>c_2>c_3>0$ & $\mathrm{SU}(2)$ & $\pm e_1$, $\pm e_2$ $\pm e_3$ & $\forall q$ \\[1mm]
$c_1>c_2>0>c_3$ & $\mathrm{SL}_2{\mathbb{R}}$ & $\pm e_1$, $\pm e_2$ $\pm e_3$ & $\forall q$ \\[1mm]
$c_1>0>c_2>c_3$ & $\mathrm{SL}_2{\mathbb{R}}$ & $\pm e_1$, $\pm e_2$ $\pm e_3$ & $\forall q$ \\[1mm]
$0>c_1>c_2>c_3$ & $\mathrm{SU}(2)$ & $\pm e_1$, $\pm e_2$ $\pm e_3$ & $\forall q$ \\[1mm]
\hline
$c_1>c_2>c_3=0$ & $\mathrm{E}(2)$ & $\pm e_1$, $\pm e_2$ $\pm e_3$ & $\forall q$ \\[1mm]
                &        & $\pm e_3$, 
                           ${\mathcal{S}}\cap\{e_1,e_2\}_{\mathbb{R}}$ & $-\frac{(c_1-c_2)^2}{4}$\\[2mm]
\hline                        
$c_1>c_2=0>c_3$ & $\mathrm{E}(1,1)$ & $\pm e_1$, $\pm e_2$ $\pm e_3$ & $\forall q$ \\[1mm]
                &        & $\pm e_2$, 
						  ${\mathcal{S}}\cap\{e_1,e_3\}_{\mathbb{R}}$ & $-\frac{(c_3-c_1)^2}{4}$\\[2mm]
\hline
$0=c_1>c_2>c_3$ & $\mathrm{E}(2)$ & $\pm e_1$, $\pm e_2$ $\pm e_3$ & $\forall q$ \\[1mm]
                &        & $\pm e_1$, 
						  ${\mathcal{S}}\cap\{e_2,e_3\}_{\mathbb{R}}$ & $-\frac{(c_2-c_3)^2}{4}$\\[2mm]

\Xhline{3\arrayrulewidth}   
\end{tabular}
\end{center}
Here $\mathcal{S}$ is the unit sphere in the Lie algebra $\mathfrak{g}$ of $G$ centered at the origin.
\end{theorem}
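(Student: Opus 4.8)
The proof is essentially the tabulation of the case analysis carried out just before the statement, and I would organize it as follows. By Theorem~\ref{thm4.1}, together with the expressions for $\overline\Delta_g X$, $\mathrm{tr}_g R(\nabla_\cdot X,X)\cdot$ and $\nabla_X X$ on a unimodular Lie group recorded above, a left-invariant unit vector field $X=x_1e_1+x_2e_2+x_3e_3$ is magnetic with charge $q$ precisely when both systems \eqref{uni-MEq1} and \eqref{uni-MEq2} hold. The whole argument then runs through the elementary dictionary $\mu_i+\mu_j=c_k$ and $\mu_i-\mu_j=c_j-c_i$ for $\{i,j,k\}=\{1,2,3\}$, from which one extracts the key fact that $\mu_i^2=\mu_j^2$ holds if and only if $c_i=c_j$ or $c_k=0$, and which rewrites all coefficients occurring in \eqref{uni-MEq1}--\eqref{uni-MEq2} in terms of the $c_i$.

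I would then split according to the number of vanishing components $x_i$. If two of them vanish, $X=\pm e_i$ and both systems hold identically for every $q$, giving the ``$\pm e_1,\pm e_2,\pm e_3$'' entries. If exactly one vanishes, say $x_k=0$, then \eqref{uni-MEq2} forces $\mu_i^2=\mu_j^2$ where $\{i,j\}=\{1,2,3\}\setminus\{k\}$: in the branch $c_i=c_j$ a one-line substitution, using that then $|\nabla X|^2=\mu_i^2+\mu_k^2$, shows both systems hold for all $q$ and yields the great circle $\mathcal S\cap\{e_i,e_j\}_{\mathbb R}$; in the branch $c_k=0$, $c_i\ne c_j$, the only surviving equation of \eqref{uni-MEq1} is $\mu_kc_k-\mu_i\mu_j+q=0$, i.e. $q=-\tfrac14(c_i-c_j)^2$. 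If no $x_i$ vanishes, \eqref{uni-MEq2} forces $\mu_1^2=\mu_2^2=\mu_3^2$; running through the possible sign patterns via the dictionary shows this means either $c_1=c_2=c_3$, in which case \eqref{uni-MEq1} is vacuous and the whole sphere $\mathcal S$ is magnetic for every $q$, or, up to relabelling, $c_1=c_2=0\ne c_3$, in which case \eqref{uni-MEq1} forces $q=-\tfrac14 c_3^2$. Note that the three ``exactly one component zero'' sub-problems ($k=1,2,3$) must all be kept, since they cease to be symmetric once one imposes $c_1\ge c_2\ge c_3$.

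It then remains to assemble the table. For each admissible sign pattern of $(c_1,c_2,c_3)$ under the normalization $c_1\ge c_2\ge c_3$, Milnor's classification identifies the Lie group, and the solution families above are merged: for instance, when $c_1=c_2=c_3\ne0$ the axis and great-circle solutions are absorbed into the full-sphere family; when $c_1>c_2=c_3$ one keeps $\pm e_1$ and $\mathcal S\cap\{e_2,e_3\}_{\mathbb R}$ for all $q$, supplemented by the full sphere at the single charge $-\tfrac14 c_1^2$ exactly when $c_2=c_3=0$; and in the Heisenberg, $\mathrm E(2)$, $\mathrm E(1,1)$ and abelian cases one has to be sure that the distinguished axis and the distinguished charge $-\tfrac14(c_j-c_k)^2$ carry the correct pair of indices after reordering. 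The only genuinely delicate point, and the step I would treat most carefully, is checking that once the ``$c_i=c_j$ or $c_k=0$'' alternatives coming from cases (ii) and (iii) are intersected with the ordering convention, the resulting list of rows is simultaneously exhaustive and non-overlapping; each individual verification is a one-line substitution into \eqref{uni-MEq1}--\eqref{uni-MEq2}.
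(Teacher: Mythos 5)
Your proposal is correct and follows essentially the same route as the paper: the authors derive the two systems \eqref{uni-MEq1}--\eqref{uni-MEq2}, use the identities $\mu_i+\mu_j=c_k$ and $\mu_i-\mu_j=c_j-c_i$, and run the identical case split on how many components $x_i$ vanish (two, one, or none), with the same conclusions $q$ arbitrary, $q=-\tfrac14(c_i-c_j)^2$, and $q=-\tfrac14 c_k^2$ in the respective branches, before assembling the table via Milnor's classification under $c_1\geq c_2\geq c_3$.
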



\begin{remark}[Contact metric structure]
{\rm On a $3$-dimensional \emph{non-abelian} unimodular Lie group 
$G=G(c_1,c_2,c_3)$ with $c_1=2$, we can introduce a left 
invariant almost contact structure $(\varphi,\xi,\eta)$ compatible with 
the left invariant metric $\langle\cdot,\cdot\rangle$ by \cite{I, Perrone1997}:
\[
\xi=e_{1},\ \ \eta=g(\xi,\cdot),\ \ 
\varphi e_1=0,\ \ 
\varphi e_2=e_3,\ \ 
\varphi e_3=-e_2.
\] 
Then one can check that $(\varphi,\xi,\eta,\langle\cdot,\cdot\rangle)$ is a 
left invariant contact metric structure on $G$.
The $\varphi$-sectional curvature of $G$ is constant 
\[
-3+\frac{1}{4}(c_2-c_3)^2+c_2+c_3.
\]
In particular $G$ is Sasakian when and only when 
$c_2=c_3$. 

Moreover if $c_2\not=c_3$, $G$ is a 
contact $(\kappa,\mu)$-space with 
\[
\kappa=1-\frac{1}{4}(c_2-c_3)^2,\ \ 
\mu=2-(c_2+c_3).
\]
}
\end{remark}
Perrone \cite{Perrone2008} investigated stability of the Reeb vector
fields on 3-dimensional compact contact $(\kappa,\mu)$-spaces with
respect to the Dirichlet energy. For example, in \cite[Example~3.1]{Perrone2008}
the author provides (probably) the first examples of non-Killing harmonic vector fields which are
stable.

\section{Non-unimodular Lie groups}
\subsection{}
Let $G$ be a non-unimodular $3$-dimensional Lie group with a left invariant metric. 
Then the \emph{unimodular kernel} $\mathfrak{u}$ of $\mathfrak{g}$ is defined by
\[
\mathfrak{u}=\{X \in \mathfrak{g}
\ \vert \
\mathrm{tr}\> \mathrm{ad}(X)=0\}.
\]
Here $\mathrm{ad}:\mathfrak{g}\to \mathrm{End}(\mathfrak{g})$ is 
a homomorphism defined by
\[
\mathrm{ad}(X)Y=[X,Y].
\]
One can see that $\mathfrak{u}$ is an ideal of $\mathfrak{g}$ which contains 
the ideal $[\mathfrak{g},\mathfrak{g}]$.

On $\mathfrak{g}$, we can take an orthonormal basis
$\{e_1,e_2,e_3\}$ such that
\begin{enumerate}
\item $\langle e_{1}, X\rangle=0,\ X \in \mathfrak{u}$,
\item $\langle [e_1,e_2], [e_1,e_3]\rangle=0$.
\end{enumerate} 
Then the commutation relations of the basis are given by
\begin{equation}
\label{Lie-bra-n-u}
[e_1,e_2]=a_{11} e_{2}+a_{12} e_{3},\
[e_2,e_3]=0,\ 
[e_1,e_3]=a_{21} e_2+a_{22} e_3,
\end{equation}
with $a_{11}+a_{22}\not=0$ and $a_{11}a_{21}+a_{12}a_{22}=0$.
Under a suitable homothetic change of the metric, we may assume that
$a_{11}+a_{22}=2$. 
Moreover, we may assume that $a_{11}a_{22}+a_{12}a_{21}=0$ \cite[\S 2.5]{MP}. 
Then the constants $a_{11}$,
$a_{12}$, $a_{21}$ and $a_{22}$ are represented
as 
\begin{equation}
\label{n-u-coeff}
a_{11}=1+\alpha,\ a_{12}=(1+\alpha)\beta,\
a_{21}=-(1-\alpha)\beta,\
a_{22}=1-\alpha. 
\end{equation}
If necessarily, 
by changing the sign of $e_1$, $e_2$ and 
$e_3$, we may assume that the constants $\alpha$ and 
$\beta$ satisfy the condition $\alpha$, $\beta\geq 0$.
We note that for the case that $\alpha=\beta=0$, $G$ is of constant negative 
curvature (see Example \ref{eg:6.1}). 
We refer $(\alpha,\beta)$ as the structure constants 
of the non-unimodular Lie algebra $\mathfrak{g}$.
Non-unimodular Lie algebras $\mathfrak{g}=\mathfrak{g}(\alpha,\beta)$ are
classified by the \emph{Milnor invariant} $\mathcal{D}=(1-\alpha^2)(1+\beta^2)$.

\begin{proposition}
 For any $(\alpha,\beta)\not=(0,0)$, 
 two Lie algebras $\mathfrak{g}(\alpha,\beta)$ and 
$\mathfrak{g}(\alpha^\prime,\beta^\prime)$ are isomorphic
if and only if their Milnor invariants $\mathcal{D}$ and $\mathcal{D}^\prime$ 
agree. 
\end{proposition}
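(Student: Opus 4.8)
The plan is to attach to each $\mathfrak{g}(\alpha,\beta)$ a linear‑algebraic object that is visibly preserved by Lie algebra isomorphisms and that, conversely, pins down the isomorphism class; the invariant $\mathcal{D}$ will then reappear as (essentially) its determinant. First I would note that the unimodular kernel is intrinsic: for any isomorphism $\varphi$ one has $\mathrm{ad}(\varphi X)=\varphi\circ\mathrm{ad}(X)\circ\varphi^{-1}$, hence the linear functional $X\mapsto\mathrm{tr}\,\mathrm{ad}(X)$ is transported by $\varphi$, and therefore its kernel $\mathfrak{u}=\ker(\mathrm{tr}\,\mathrm{ad})$ — a $2$‑dimensional ideal, since $G$ is non‑unimodular — is carried onto $\mathfrak{u}'$. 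From \eqref{Lie-bra-n-u} the ideal $\mathfrak{u}=\mathrm{span}\{e_2,e_3\}$ is abelian, and by the normalization $a_{11}+a_{22}=2$ used in \eqref{n-u-coeff} we have $\mathrm{tr}\,\mathrm{ad}(e_1)=2$, so that $\{X:\mathrm{tr}\,\mathrm{ad}(X)=2\}$ is exactly the affine hyperplane $e_1+\mathfrak{u}$. Since $\mathfrak{u}$ is abelian, $\mathrm{ad}(u)|_{\mathfrak{u}}=0$ for every $u\in\mathfrak{u}$, so the endomorphism $A:=\mathrm{ad}(e_1)|_{\mathfrak{u}}\in\mathrm{End}(\mathfrak{u})$ does not depend on which representative of $e_1+\mathfrak{u}$ is chosen.

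In the basis $\{e_2,e_3\}$ one reads off $Ae_2=a_{11}e_2+a_{12}e_3$ and $Ae_3=a_{21}e_2+a_{22}e_3$, so $\mathrm{tr}\,A=2$ and, by \eqref{n-u-coeff}, $\det A=a_{11}a_{22}-a_{12}a_{21}=(1-\alpha^2)(1+\beta^2)=\mathcal{D}$. The heart of the argument is the claim that $\mathfrak{g}(\alpha,\beta)\cong\mathfrak{g}(\alpha',\beta')$ if and only if $A'=PAP^{-1}$ for some $P\in\mathrm{GL}(2,\mathbb{R})$ (conjugacy of the associated operators). For the forward direction, an isomorphism $\varphi$ sends $\mathfrak{u}$ onto $\mathfrak{u}'$ and, preserving $\mathrm{tr}\,\mathrm{ad}$, sends $e_1$ to some $e_1'+u'$ with $u'\in\mathfrak{u}'$; restricting $\mathrm{ad}(\varphi e_1)=\varphi\circ\mathrm{ad}(e_1)\circ\varphi^{-1}$ to $\mathfrak{u}'$ and using $\mathrm{ad}(u')|_{\mathfrak{u}'}=0$ gives $A'=(\varphi|_{\mathfrak{u}})\,A\,(\varphi|_{\mathfrak{u}})^{-1}$. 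For the converse, given $P$ with $A'=PAP^{-1}$, define $\varphi(e_1)=e_1'$ and $\varphi|_{\mathfrak{u}}=P$ (via the bases $\{e_2,e_3\}$ and $\{e_2',e_3'\}$); the relations $[e_1,u]=Au$ and $[u,\tilde u]=0$ then show at once that $\varphi$ respects brackets, so it is an isomorphism.

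It remains to compare conjugacy classes of trace‑$2$ real $2\times2$ matrices. Both $A$ and $A'$ have characteristic polynomial $t^{2}-2t+\mathcal{D}$, so if $\mathcal{D}\neq\mathcal{D}'$ they cannot be conjugate, which yields the ``only if'' part of the Proposition. If $\mathcal{D}=\mathcal{D}'$, the two matrices share a characteristic polynomial, and over $\mathbb{R}$ this determines the conjugacy class in every case except that of a repeated eigenvalue, where one must separate the scalar matrix from a nontrivial Jordan block. A repeated eigenvalue occurs exactly when $\mathcal{D}=1$ (double eigenvalue $1$), and here I would observe that $A$ is the identity precisely when $a_{11}=a_{22}=1$ and $a_{12}=a_{21}=0$, i.e. $\alpha=\beta=0$, which is excluded by hypothesis; hence for $(\alpha,\beta)\neq(0,0)$ the matrix $A$ is never scalar, so $A$ and $A'$ are both conjugate to $\left(\begin{smallmatrix}1&1\\0&1\end{smallmatrix}\right)$ and therefore to each other. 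Combining the two directions gives $\mathfrak{g}(\alpha,\beta)\cong\mathfrak{g}(\alpha',\beta')\iff A\sim A'\iff\mathcal{D}=\mathcal{D}'$. I expect the main friction to be not any single computation but the bookkeeping in the equivalence ``$\cong$ iff $A\sim A'$'': one must use both that $\mathfrak{u}$ is intrinsic and that it is abelian (so that the freedom to change $e_1$ within $e_1+\mathfrak{u}$ is harmless), and one must not overlook the $\mathcal{D}=1$ scalar‑versus‑Jordan alternative — which is precisely why the hypothesis $(\alpha,\beta)\neq(0,0)$ cannot be dropped.
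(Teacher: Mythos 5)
Your proof is correct and complete. The paper itself offers no proof of this Proposition (it is quoted from Milnor's classification of left-invariant metrics, via the normalization in \cite{MP}), so there is nothing to compare against line by line; but your argument is exactly the standard one, and every step checks out. The reduction to conjugacy of $A=\mathrm{ad}(e_1)|_{\mathfrak{u}}$ is justified carefully: the unimodular kernel is intrinsic because $\mathrm{tr}\,\mathrm{ad}$ is transported by any isomorphism, the normalization $a_{11}+a_{22}=2$ pins $\varphi(e_1)$ into the coset $e_1'+\mathfrak{u}'$, and the abelianness of $\mathfrak{u}$ makes $A$ independent of the representative. The computation $\det A=a_{11}a_{22}-a_{12}a_{21}=(1-\alpha^2)(1+\beta^2)=\mathcal{D}$ with $\mathrm{tr}\,A=2$ is right, so the characteristic polynomial $t^2-2t+\mathcal{D}$ carries exactly the information of $\mathcal{D}$. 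You also correctly isolate the one delicate case, $\mathcal{D}=1$, where a common characteristic polynomial does not by itself force conjugacy, and you resolve it by observing that $A$ is scalar only for $(\alpha,\beta)=(0,0)$ --- which is precisely why the hypothesis in the Proposition is needed. No gaps.
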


Under this normalization, 
the Levi-Civita connection of $G$ is given by the following table:
\begin{proposition}
\[
\begin{array}{ccc}
\nabla_{e_1}e_{1}=0, & \nabla_{e_1}e_{2}=\beta e_{3}, & \nabla_{e_1}e_{3}=-\beta e_{2},\\
\nabla_{e_2}e_{1}=-(1+\alpha)e_{2}-\alpha\beta e_{3}, 
& \nabla_{e_2}e_{2}=(1+\alpha) e_1, & 
\nabla_{e_2}e_{3}=\alpha\beta e_{1},\\
\nabla_{e_3}e_{1}=-\alpha\beta e_{2}
-(1-\alpha)e_{3}, 
& \nabla_{e_3}e_{2}=\alpha\beta e_{1}, & \nabla_{e_3}e_{3}=(1-\alpha)e_1.
\end{array}
\]
The Riemannian curvature $R$ is given by
\begin{align*}
R(e_1,e_2)e_1=&\{\alpha\beta^{2}+(1+\alpha)^{2}+\alpha\beta^{2}(1+\alpha)\}e_{2},
\\
R(e_1,e_2)e_2=&-\{\alpha\beta^{2}+(1+\alpha)^{2}+\alpha\beta^{2}(1+\alpha)\}e_{1},
\\
R(e_1,e_3)e_1=&-\{\alpha\beta^{2}-(1-\alpha)^{2}+\alpha\beta^{2}(1-\alpha)\}e_{3},
\\
R(e_1,e_3)e_3=&\{\alpha\beta^{2}-(1-\alpha)^{2}+\alpha\beta^{2}(1-\alpha)\}e_{1},
\\
R(e_2,e_3)e_2=&\{1-\alpha^{2}(1+\beta^{2})\}e_{3},
\\
R(e_2,e_3)e_3=&-\{1-\alpha^{2}(1+\beta^{2})\}e_{2}.
\end{align*}
The basis $\{e_1,e_2,e_3\}$ diagonalizes the Ricci tensor field.
The principal Ricci curvatures are given by
\[
\rho_{1}=-2\{1+\alpha^2(1+\beta^2)\}<-2,\ \ 
\rho_{2}=-2\{1+\alpha(1+\beta^2)\}<-2,\ \ 
\rho_{3}=-2\{1-\alpha(1+\beta^2)\}.
\]
The scalar curvature is 
\[
-2\{3+\alpha^2(1+\beta^2)\}<0.
\]
\end{proposition}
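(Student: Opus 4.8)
The plan is to treat this as a direct computation from the Lie-algebraic data, organized in the standard three stages: connection, then curvature, then Ricci and scalar curvature. First I would record the brackets explicitly by substituting \eqref{n-u-coeff} into \eqref{Lie-bra-n-u}, namely $[e_1,e_2]=(1+\alpha)e_2+(1+\alpha)\beta e_3$, $[e_1,e_3]=-(1-\alpha)\beta e_2+(1-\alpha)e_3$, and $[e_2,e_3]=0$. Since the metric is left invariant, the inner products $\langle e_i,e_j\rangle$ are constant, so the Koszul formula collapses to the purely algebraic identity
\[
2\langle\nabla_{e_i}e_j,e_k\rangle=\langle[e_i,e_j],e_k\rangle-\langle[e_j,e_k],e_i\rangle+\langle[e_k,e_i],e_j\rangle.
\]
Evaluating this for all triples $(i,j,k)$ produces the entries of the connection table. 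A convenient internal check is that differentiating $\langle e_j,e_k\rangle=\mathrm{const}$ forces each operator $\nabla_{e_i}\colon\mathfrak g\to\mathfrak g$ to be skew-symmetric; for instance $\nabla_{e_1}e_2=\beta e_3$ and $\nabla_{e_1}e_3=-\beta e_2$ are consistent with this, and the torsion-free identity $\nabla_{e_i}e_j-\nabla_{e_j}e_i=[e_i,e_j]$ gives a second check against the brackets above.

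With the connection table established, I would compute the curvature from
\[
R(e_i,e_j)e_k=\nabla_{e_i}\nabla_{e_j}e_k-\nabla_{e_j}\nabla_{e_i}e_k-\nabla_{[e_i,e_j]}e_k.
\]
The vanishing $[e_2,e_3]=0$ kills the last term for the $(2,3)$-plane, while for the $(1,2)$- and $(1,3)$-planes the bracket is a combination of $e_2,e_3$ fed back through the connection. Carrying out the six compositions reproduces the listed values of $R(e_1,e_2)e_1$, $R(e_1,e_2)e_2$, $R(e_1,e_3)e_1$, $R(e_1,e_3)e_3$, $R(e_2,e_3)e_2$, and $R(e_2,e_3)e_3$; the algebraic symmetries of $R$—in particular that $R(e_i,e_j)e_i$ and $R(e_i,e_j)e_j$ carry opposite coefficients, exactly as the table displays—serve as consistency checks.

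For the Ricci tensor I would first confirm that $\{e_1,e_2,e_3\}$ is a Ricci eigenbasis, that is $\mathrm{Ric}(e_i,e_j)=0$ for $i\neq j$. This is the one place needing a little care, because it involves the mixed components absent from the displayed list: a short computation gives $R(e_1,e_2)e_3=0$ and $R(e_1,e_3)e_2=(1-\alpha)\beta\,e_1$, and in each off-diagonal Ricci sum $\mathrm{Ric}(e_i,e_j)=\sum_k\langle R(e_k,e_i)e_j,e_k\rangle$ the one surviving mixed term points along a direction orthogonal to the traced index and hence drops out (e.g. in $\mathrm{Ric}(e_1,e_2)$ the $k=3$ term pairs $R(e_1,e_3)e_2\parallel e_1$ against $e_3$). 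Granting diagonality, each principal curvature is a sum of two sectional curvatures, $\rho_i=\sum_{j\neq i}\langle R(e_i,e_j)e_j,e_i\rangle$, read directly off the curvature formulas; the cross terms in $\alpha\beta^2$ cancel, leaving $\rho_1=-2\{1+\alpha^2(1+\beta^2)\}$ and, similarly, $\rho_2$ and $\rho_3$. The scalar curvature is $\rho_1+\rho_2+\rho_3$, in which the terms linear in $\alpha(1+\beta^2)$ coming from $\rho_2$ and $\rho_3$ cancel, yielding $-2\{3+\alpha^2(1+\beta^2)\}$; the stated strict inequalities then follow at once since $\alpha,\beta\geq0$.

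The computation is entirely routine, so the only genuine obstacle is bookkeeping, and the main risk is a sign or coefficient slip in the curvature stage, where one composes two non-diagonal connection operators. The skew-symmetry of each $\nabla_{e_i}$, the torsion identity, and the symmetries of $R$ provide enough redundant checks to catch such errors, and verifying that the off-diagonal Ricci components vanish is the only step that is not a mechanical substitution.
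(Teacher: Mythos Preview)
Your proposal is correct and follows exactly the expected route: the paper states this proposition without proof, and the only way to establish it is the direct computation you outline---Koszul formula for $\nabla$ (collapsed to the algebraic bracket identity since the metric is left invariant), then the curvature formula, then the Ricci trace and scalar sum. Your internal checks (skew-symmetry of each $\nabla_{e_i}$, the torsion identity, and the verification that the off-diagonal Ricci components vanish via the mixed curvature terms) are precisely the right safeguards for this kind of bookkeeping exercise.
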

The simply connected Lie 
group $\widetilde{G}=\widetilde{G}(\alpha,\beta)$ corresponding to  
the non-unimodular Lie algebra $\mathfrak{g}(\alpha,\beta)$ 
is given explicitly by \cite{IN}:
\[
\widetilde{G}(\alpha,\beta)=
\left\{
\left(
\begin{array}{cccc}
1 & 0 & 0 & x\\
0 & \alpha_{11}(x) & \alpha_{12}(x) & y\\
0 & \alpha_{21}(x) & \alpha_{22}(x) & z\\
0 & 0 & 0 &1
\end{array}
\right)
\
\biggr
\vert
\
x,y,z \in \mathbb{R}
\right\},
\]
where 
\[
\left(
\begin{array}{cc}
\alpha_{11}(x) & \alpha_{12}(x)\\
\alpha_{21}(x) & \alpha_{22}(x)
\end{array}
\right)=
\exp\left(
x
\left[
\begin{array}{cc}
1+\alpha & (1+\alpha)\beta\\
-(1-\alpha)\beta & 1-\alpha
\end{array}
\right]
\right).
\]
This shows that $\widetilde{G}(\alpha,\beta)$ is the semi-direct product
$\mathbb{R}\ltimes\mathbb{R}^2$ 
with multiplication
\begin{equation*}
\label{semi-direct}
(x,y,z)\cdot (x^\prime,y^\prime,z^\prime)=
(x+x^\prime,y+\alpha_{11}(x)y^\prime+\alpha_{12}(x)z^\prime,
z+\alpha_{21}(x)y^\prime+\alpha_{22}(x)z^\prime).
\end{equation*}
The Lie algebra of $\widetilde{G}(\alpha,\beta)$ is 
is spanned by the basis
\[
e_{1}=
\left(
\begin{array}{cccc}
0 & 0 & 0 & 1\\
0 & 1+\alpha & -(1-\alpha)\beta & 0\\
0 & (1+\alpha)\beta & 1-\alpha & 0\\
0 & 0 & 0 &0
\end{array}
\right),
\ \
e_{2}=
\left(
\begin{array}{cccc}
0 & 0 & 0 & 0\\
0 & 0 & 0 & 1\\
0 & 0 & 0 & 0\\
0 & 0 & 0 &0
\end{array}
\right),
\ \
e_{3}=
\left(
\begin{array}{cccc}
0 & 0 & 0 & 0\\
0 & 0 & 0 & 0\\
0 & 0 & 0 & 1\\
0 & 0 & 0 &0
\end{array}
\right).
\]
This basis satisfies the commutation relation
\[
[e_1,e_2]=(1+\alpha)(e_{2}+\beta e_{3}),\ \
[e_2,e_3]=0,\ \ 
[e_3,e_1]=(1-\alpha)(\beta e_2-e_{3}).
\]
Thus the Lie algebra of $\widetilde{G}(\alpha,\beta)$ 
is the non-unimodular Lie algebra 
$\mathfrak{g}=\mathfrak{g}(\alpha,\beta)$. 
The left invariant vector fields corresponding to 
$e_1$, $e_2$ and $e_3$ are
\[
e_1=\frac{\partial}{\partial x},\ \ 
e_2=\alpha_{11}(x)\frac{\partial}{\partial y}+\alpha_{12}(x)\frac{\partial}{\partial z},
\ \
e_3=\alpha_{21}(x)\frac{\partial}{\partial y}+\alpha_{22}(x)\frac{\partial}{\partial z}.
\]
\begin{example}[$\alpha=0$, $\mathcal{D}\geq 1$]\label{eg:6.1}
{\rm The simply connected Lie group 
$\widetilde{G}(0,\beta)$ is isometric to the hyperbolic $3$-space 
$\mathbb{H}^3(-1)$ of curvature $-1$ and given 
explicitly by
\[
\widetilde{G}(0,\beta)=
\left\{
\left(
\begin{array}{cccc}
1 & 0 & 0 & x\\
0 & e^{x}\cos(\beta x) & -e^{x}\sin(\beta x) & y\\
0 & e^{x}\sin(\beta x) & e^{x}\cos(\beta x) & z\\
0 & 0 & 0 &1
\end{array}
\right)
\
\biggr
\vert
\
x,y,z \in \mathbb{R}
\right\}. 
\]
The left invariant metric is 
$dx^2+e^{-2x}(dy^2+dz^2)$. 
Thus $\widetilde{G}(0,\beta)$
is the warped product model of $\mathbb{H}^{3}(-1)$.
In fact, setting $w=e^{x}$, the left invariant metric of $\widetilde{G}$ 
can be rewritten as the Poincar{\'e} metric
\[
\frac{dy^2+dz^2+dw^2}{w^2}.
\]
The Milnor invariant of $\widetilde{G}(0,\beta)$ is 
$\mathcal{D}=1+\beta^2\geq 1$.
}
\end{example}

\begin{example}[$\beta=0$, $\mathcal{D}\leq1$]\label{eg:6.2}{\rm
For each $\alpha\geq 0$, $\widetilde{G}(\alpha,0)$ is given by:
\[
\widetilde{G}(\alpha,0)=\left\{
\left(
\begin{array}{cccc}
1 & 0 & 0 & x\\
0 & e^{(1+\alpha)x} & 0 & y\\
0 & 0 & e^{(1-\alpha)x} & z\\
0 & 0 & 0 &1
\end{array}
\right)
\
\biggr
\vert
\
x,y,z \in \mathbb{R}
\right\}.
\]
The left invariant Riemannian metric is given explicitly by
\[
dx^{2}+e^{-2(1+\alpha)x}dy^{2}+e^{-2(1-\alpha)x}dz^{2}.
\]
The Milnor invariant is $\mathcal{D}=1-\alpha^2\leq 1$.

This family of Riemannian homogenous spaces has been studied in
\cite{I-L, Nistor}. The Lie group $\widetilde{G}(\alpha,0)$ 
is realized as a warped product of 
the hyperbolic plane $\mathbb{H}^{2}(-(1+\alpha)^2)$
of curvature $-(1+\alpha)^2$ and the real line $\mathbb{R}$ with 
warping function $e^{(\alpha-1)x}$. In fact, via the coordinate change 
$(u,v)=(\,(1+\alpha)y,e^{(1+\alpha)x})$, the metric is rewritten as
\[
\frac{du^2+dv^2}{(1+\alpha)^2v^2}+f_{\alpha}(u,v)^2\, dz^2, \ \ f_{\alpha}(u,v)=
\exp\left(
\frac{\alpha-1}{\alpha+1}\log v
\right).
\]
Here we observe locally symmetric examples:
\begin{itemize}
\item If $\alpha=0$ then $\widetilde{G}(0,0)$ is a warped product model of 
hyperbolic 3-space $\mathbb{H}^{3}(-1)$. 
\item If $\alpha=1$ then $\widetilde{G}(1,0)$ is 
isometric to the Riemannian product $\mathbb{H}^{2}(-4)\times \mathbb{R}$. 
\end{itemize}
Note that $\mathbb{H}^3$ does not admit
any other Lie group structure. 
}
\end{example}

\begin{example}[$\alpha=1$, $\mathcal{D}=0$]\label{eg:6.3}{\rm
Assume that $\alpha=1$. Then $\widetilde{G}(1,\beta)$ is given explicitly by
\[
\widetilde{G}(1,\beta)=
\left\{
\left(
\begin{array}{cccc}
1 & 0 & 0 & x\\
0 & e^{2x} & 0 & y\\
0 & \beta(e^{2x}-1) & 1 & z\\
0 & 0 & 0 &1
\end{array}
\right)
\
\biggr
\vert
\
x,y,z \in \mathbb{R}
\right\}.
\]
The left invariant metric is 
\[
dx^2+\{e^{-4x}+\beta^2(1-e^{-2x})^2\}dy^2-2\beta(1-e^{-2x})dydz+dz^2.
\]
The non-unimodular Lie group $\widetilde{G}(1,\beta)$ has sectional curvatures
\[
K_{12}=-3\beta^{2}-4,\ 
\ 
K_{13}=K_{23}=\beta^{2},
\]
where $K_{ij}$ ($i\not=j$) denote the 
sectional curvatures of the planes spanned
by vectors $e_i$ and $e_j$. 
One can check that $\widetilde{G}(1,\beta)$ is isometric to
the so-called 
\emph{Bianchi-Cartan-Vranceanu space} $M^{3}(-4,\beta)$
with $4$-dimensional isometry group and 
isotropy subgroup $\mathrm{SO}(2)$:
\[
M^{3}(-4,\beta)=
\left(
\{(u,v,w)\in \mathbb{R}^{3}
\
\vert 
\
u^2+v^2<1\},g_{_\beta}
\right),
\]
with metric 
\[
g_{_\beta}=\frac{du^2+dv^2}{(1-u^2-v^2)^{2}}+
\left(
dw+ \frac{\beta(vdu-udv)}{1-u^2-v^2}
\right)^{2},\ \ \beta\geq 0.
\]
The family $\{\widetilde{G}(1,\beta)\}_{\beta\geq 0}$ is 
characterized by the condition $\mathcal{D}=0$. 
In particular $M^{3}(-4,\beta)$ with \emph{positive} $\beta$ is isometric to 
the universal covering $\widetilde{\mathrm{SL}}_{2}\mathbb{R}$
of the special linear group equipped with the above metric 
(\textit{cf.} \cite{Thurston}), 
but 
\emph{not
isomorphic} to $\widetilde{\mathrm{SL}}_{2}\mathbb{R}$
as Lie groups. We here note that $\widetilde{\mathrm{SL}}_{2}\mathbb{R}$ is a 
\emph{unimodular} Lie group, while $\widetilde{G}(1,\beta)$ is
\emph{non-unimodular}. 
}
\end{example}

\subsection{Magnetic equations}
We first recall that $\alpha,\beta\geq0$
and make some notations in order to simplify some other complicated expressions
\begin{equation*}
\left\{
\begin{array}{l}
u=\alpha\beta^2+(1+\alpha)^2+\alpha\beta^2(1+\alpha),\\[2mm]
v=\alpha\beta^2-(1-\alpha)^2+\alpha\beta^2(1-\alpha),\\[2mm]
w=1-\alpha^2(1+\beta^2).
\end{array}\right.
\end{equation*}
Take a left invariant unit vector field $X=x_1e_1+x_2e_2+x_3e_3$ on $G$, where $x_1, x_2, x_3$
are constants such that $x_1^2+x_2^2+x_3^2=1$.

We have to develop the two magnetic equations provided in Theorem~\ref{thm4.1}. 

{\bf 1.} ${\rm tr}_g R(\nabla_{\cdot}X,X)\cdot=q\nabla_XX$

After a straightforward computation we obtain
\begin{subequations}
\label{non-uni-MEq1}
\begin{equation}
\label{n-u-1.1}
\begin{array}{l}
-\left[x_2(1+\alpha)+x_3\alpha\beta\right]x_2u-x_1^2(1+\alpha)u+\left[x_2\alpha\beta+
   x_3(1-\alpha)\right]x_3v \\[1mm]
   \qquad\qquad\qquad
   +x_1^2(1-\alpha)v =q\left[x_2^2(1+\alpha)+x_3^2(1-\alpha)+x_2x_3\alpha\beta\right]
\end{array}
\end{equation}
\begin{equation}
\label{n-u-1.2}
x_1x_3\beta u+x_1x_3\beta w = -q\left[x_1x_3\beta(1+\alpha)+x_1x_2(1+\alpha)\right]
\end{equation}
\begin{equation}
\label{n-u-1.3}
\begin{array}{l}
x_1x_2\beta v-x_1x_3(1+\alpha)w+x_1x_2\alpha\beta w+x_1x_2(1-\alpha)w \\[1mm]
\hfill	=q\left[x_1x_2\beta(1-\alpha)-x_1x_3(1-\alpha)\right].
\end{array}
\end{equation}
\end{subequations}

{\bf 2.} $\overline\Delta_gX=|\nabla X|^2X$

Again, after a straightforward computations, we get:
\begin{subequations}
\label{non-uni-MEq2}
\begin{equation}
\label{n-u-2.1}
2x_1\big(1+\alpha^2+\alpha^2\beta^2\big)=|\nabla X|^2 x_1
\end{equation}
\begin{equation}
\label{n-u-2.2}
x_2\big[\beta^2+(1+\alpha)^2+\alpha^2\beta^2\big]-2x_3\beta(1-\alpha)=|\nabla X|^2x_2
\end{equation}
\begin{equation}
\label{n-u-2.3}
x_3\big[\beta^2+(1-\alpha)^2+\alpha^2\beta^2\big]+2x_2\beta(1+\alpha)=|\nabla X|^2x_3,
\end{equation}
\end{subequations}
where
\begin{equation}
\begin{array}{c}
|\nabla X|^2=g(\overline\Delta_gX,X) = ~ 2x_1^2\big(1+\alpha^2+\alpha^2\beta^2\big)\\[1mm]
   +x_2^2\big[\beta^2+(1+\alpha)^2+\alpha^2\beta^2\big]
    +x_3^2 \big[\beta^2+(1-\alpha)^2+\alpha^2\beta^2\big]+4x_2x_3\alpha\beta.
\end{array}
\end{equation}

Recall that $\pm e_1$, $\pm e_2$ and $\pm e_3$ are all unit magnetic vector fields in the unimodular
Lie groups. Contrary, the non-unimodular case is more rigid. For example, we have the following:
\begin{center}
\begin{tabular}{|c|l|c|}
\Xhline{3\arrayrulewidth}  
 & When $e_i$ is magnetic? & Condition for $q$\\
\Xhline{3\arrayrulewidth}  
$e_1$ & never & --\\
\hline
$e_2$ & $\beta=0$ & $q=-(1+\alpha)^2$ \\
& otherwise, never & --\\
\hline
$e_3$ & $\alpha=1$ & $\forall q$\\
 & $\alpha\neq1$ and $\beta=0$ & $q=-(1-\alpha)^2$\\
 & otherwise, never & --\\
 \hline
\Xhline{3\arrayrulewidth}  
\end{tabular}
\end{center}

Let us continue our investigation with some particular situations:

{\bf I.} $\alpha=0$ and $\beta\in{\mathbb{R}}$ \quad \fbox{$G$ is isometric to ${\mathbb{H}}^3(-1)$ }
\quad $\mathcal{D}\geq1$

We have $u=1$, $v=-1$, $w=1$ and $|\nabla X|^2=1+x_1^2+\beta^2(1-x_1^2)$.

The two magnetic equations simplify and we obtain
\begin{equation*}
\left\{
\begin{array}{rcl}
1+x_1^2&=&-q(1-x_1^2), \\[1mm]
0&=&-qx_1(x_2+\beta x_3), \\ [1mm]
-x_1x_2\beta-x_1x_3\beta+x_1x_2&=& ~ qx_1(\beta x_2-x_3) \\[2mm]
2x_1&=&|\nabla X|^2x_1 \\[1mm]
(1+\beta^2)x_2-2\beta x_3&=&|\nabla X|^2x_2\\[1mm]
(1+\beta^2)x_3+2\beta x_2&=&|\nabla X|^2x_3.
\end{array}
\right.
\end{equation*}

This system has solution if and only $\beta=0$. 
Hence we can state the following.
\begin{proposition} 
\label{n-u-P1}
Let $G(0,\beta)$ be a $3$-dimensional non-unimodular Lie group
with left invariant metric and let $\{e_1,e_2,e_3\}$ be an orthonormal basis of the Lie algebra $\mathfrak{g}$
as described before. Then  
\begin{itemize}
\item[(i)] if $\beta\neq0$ there do not exist unit magnetic vector fields on $G$;
\item[(ii)] if $\beta=0$ then the unit magnetic vector fields on $G$ are $x_2e_2+x_3e_3$, 
with $x_2^2+x_3^2=1$, case when the strength $q=-1$.
\end{itemize}
\end{proposition}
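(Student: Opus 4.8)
The plan is to work directly with the six scalar equations displayed just above the statement: these are the components, in the orthonormal frame $\{e_1,e_2,e_3\}$, of the magnetic system \eqref{Meq} of Theorem~\ref{thm4.1} specialized to $\alpha=0$, supplemented by $|\nabla X|^2=1+x_1^2+\beta^2(1-x_1^2)$. The governing dichotomy is whether the $e_1$-component $x_1$ vanishes, because every one of these equations except the first and the fourth carries a visible factor of $x_1$.

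First I would rule out all solutions with $x_1\neq0$, for arbitrary $\beta$. Dividing the fourth equation $2x_1=|\nabla X|^2x_1$ by $x_1$ gives $|\nabla X|^2=2$, hence $1+x_1^2+\beta^2(1-x_1^2)=2$, which factors as $(1-\beta^2)(1-x_1^2)=0$. If $x_1^2=1$ then $X=\pm e_1$ and the first equation becomes $2=-q\cdot 0=0$, which is absurd; thus $\pm e_1$ is never a magnetic unit vector field here. In the remaining alternative $\beta^2=1$ (so in particular $\beta\neq0$), substituting $|\nabla X|^2=2$ into the fifth and sixth equations leaves $-2\beta x_3=0$ and $2\beta x_2=0$, whence $x_2=x_3=0$ and therefore $x_1^2=1$, which has just been excluded. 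So no magnetic unit vector field has $x_1\neq0$.

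It then remains to analyze $x_1=0$, i.e.\ $X=x_2e_2+x_3e_3$ with $x_2^2+x_3^2=1$ and $|\nabla X|^2=1+\beta^2$. The second, third and fourth equations hold identically, while the first collapses to $1=-q$, forcing $q=-1$. Inserting $|\nabla X|^2=1+\beta^2$ into the fifth and sixth equations leaves exactly $\beta x_3=0$ and $\beta x_2=0$. If $\beta\neq0$ this gives $x_2=x_3=0$, contradicting $x_2^2+x_3^2=1$; together with the previous paragraph this proves assertion (i). If $\beta=0$ the fifth and sixth equations are vacuous, so every unit field $x_2e_2+x_3e_3$ with $x_2^2+x_3^2=1$ solves all six equations with $q=-1$, which is assertion (ii); a direct check that these fields indeed satisfy \eqref{Meq}---for instance $\overline{\Delta}_gX=X$ and $\mathrm{tr}_gR(\nabla_\cdot X,X)\cdot=-\nabla_XX$ on $G(0,0)\cong\mathbb{H}^3(-1)$---can be included for completeness.

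All of this is elementary linear algebra once the six equations are written down; the only step requiring a little care is the sub-case $\beta^2=1$ inside $x_1\neq0$, where the $e_1$-Laplacian equation is automatically satisfied and one must invoke the two remaining ``vertical'' equations to reach the contradiction. Beyond this bookkeeping I do not expect a real obstacle.
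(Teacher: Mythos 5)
Your proposal is correct and follows essentially the same route as the paper: the paper writes down the same six scalar equations for $\alpha=0$ and simply asserts that the system has solutions if and only if $\beta=0$, whereas you supply the (accurate) case analysis on $x_1$, including the slightly delicate subcase $\beta^2=1$ with $x_1\neq0$. No gaps.
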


{\bf II.} $\alpha=1$ and $\beta=0$ \quad \fbox{$G$ is isometric to ${\mathbb{H}}^2(-4)\times{\mathbb{R}}$}
\quad $\mathcal{D}=0$

We have $u=4$, $v=0$, $w=0$ and $|\nabla X|^2=4(x_1^2+x_2^2)$.

The two magnetic equations reduce to
\begin{equation*}
\left\{
\begin{array}{rcl}
-8x_2^2-8x_1^2&=&~2qx_2^2\\[1mm]
0&=&-2qx_1x_2\\[2mm]
4x_1&=&|\nabla X|^2 x_1\\[1mm]
4x_2&=&|\nabla X|^2 x_2\\[1mm]
0&=&|\nabla X|^2 x_3.
\end{array}
\right.
\end{equation*}
The following result holds.
\begin{proposition}
\label{n-u-P2}
Let $G(1,0)$ be a $3$-dimensional non-unimodular Lie group
with left invariant metric and let $\{e_1,e_2,e_3\}$ be an orthonormal basis of the Lie algebra $\mathfrak{g}$
as described before. Then the unit magnetic vector fields on $G$ are 
\begin{itemize}
\item[(i)] $\pm e_3$, for arbitrary $q$
\item[(ii)] $\pm e_2$, for $q=-4$.
\end{itemize}
\end{proposition}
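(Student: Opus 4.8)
The plan is to solve directly the algebraic system displayed immediately before the statement, which is exactly what the two magnetic equations of Theorem~\ref{thm4.1} become after the specialization $\alpha=1$, $\beta=0$ (so that $u=4$, $v=0$, $w=0$), together with the unit-length constraint $x_1^2+x_2^2+x_3^2=1$ and the identity $|\nabla X|^2=4(x_1^2+x_2^2)=4(1-x_3^2)$. The whole argument is a finite case distinction according to which of the coordinates $x_i$ vanish; no genuinely new computation is needed beyond the straightforward expansions already recorded in \eqref{non-uni-MEq1} and \eqref{non-uni-MEq2}.

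First I would exploit the third component of $\overline{\Delta}_g X=|\nabla X|^2 X$, namely $|\nabla X|^2 x_3=0$, which with $|\nabla X|^2=4(1-x_3^2)$ reads $4x_3(1-x_3^2)=0$; hence either $x_3=\pm1$ or $x_3=0$. If $x_3=\pm1$ then $x_1=x_2=0$, so $X=\pm e_3$, $|\nabla X|^2=0$, and all five scalar equations collapse to $0=0$; thus $\pm e_3$ is magnetic for every $q$, which is part (i). If instead $x_3=0$, then $x_1^2+x_2^2=1$ and $|\nabla X|^2=4$, so the three equations arising from $\overline{\Delta}_g X=|\nabla X|^2 X$ hold automatically, and everything is decided by the two equations arising from $\mathrm{tr}_g R(\nabla_\cdot X,X)\cdot=q\nabla_X X$. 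The second of these is $qx_1x_2=0$, so $q=0$, $x_1=0$, or $x_2=0$. The branch $x_2=0$ gives $X=\pm e_1$ and turns the first equation into $-8=0$, impossible; the branch $q=0$ with $x_1,x_2\neq0$ turns it into $-8(x_1^2+x_2^2)=0$, again impossible; the branch $x_1=0$ gives $X=\pm e_2$ with first equation $-8x_2^2=2qx_2^2$, i.e.\ $q=-4$, which is part (ii). Since these alternatives are exhaustive, the proof is complete.

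There is no serious obstacle here. Once the connection and curvature formulas for $\mathfrak{g}(\alpha,\beta)$ are used to expand $\overline{\Delta}_g X$ and $\mathrm{tr}_g R(\nabla_\cdot X,X)\cdot$ in the basis $\{e_1,e_2,e_3\}$ --- which has already been done to obtain \eqref{non-uni-MEq1} and \eqref{non-uni-MEq2} --- the statement reduces to short bookkeeping. The only point needing a little care is the degenerate behaviour of $|\nabla X|^2$: the vector field $X=\pm e_3$ makes it vanish, so the curvature equation imposes no restriction on the charge $q$, whereas $X=\pm e_1$ must be excluded and the spurious $q=0$ branch discarded. In other words, the clerical ``hard part'' is merely substituting $u=4$, $v=0$, $w=0$ correctly so that the reduced system one solves is precisely the one printed before the statement.
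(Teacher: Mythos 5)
Your proposal is correct and follows essentially the same route as the paper: the paper specializes the general systems \eqref{non-uni-MEq1}--\eqref{non-uni-MEq2} to $u=4$, $v=0$, $w=0$, $|\nabla X|^2=4(x_1^2+x_2^2)$ and reads off the solutions, while you simply make explicit the short case analysis ($x_3(1-x_3^2)=0$, then $qx_1x_2=0$) that the paper leaves to the reader. All branches are handled correctly, including discarding $\pm e_1$ and the spurious $q=0$ branch.
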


{\bf III.} $\alpha=1$ and $\beta\neq0$ \quad 
\fbox{$G$ is isometric to the BCV space $M^3(-4,\beta)$}
\quad
$\mathcal{D}=0$

We have $u=3\beta^2+4$, $v=\beta^2$, $w=-\beta^2$ and $|\nabla X|^2=2\beta^2+4(x_1^2+x_2^2)+4x_2x_3\beta$.

The two magnetic equations reduce to
\begin{equation*}
\left\{
\begin{array}{rcl}
-(2x_2+\beta x_3)x_2(3\beta^2+4)-2x_1^2(3\beta^2+4)+x_2x_3\beta^3&=&~q(2x_2^2+\beta x_2x_3)\\[1mm]
2x_1x_3\beta(\beta^2+2)&=&-2qx_1(\beta x_3+x_2)\\[1mm]
2x_1x_3\beta^2 &=&0\\[2mm]
2x_1(\beta^2+2)&=&|\nabla X|^2 x_1\\[1mm]
2x_2(\beta^2+2)&=&|\nabla X|^2 x_2\\[1mm]
2x_3\beta^2+4x_2\beta&=&|\nabla X|^2 x_3.
\end{array}
\right.
\end{equation*}
We obtain the following.
\begin{proposition}
\label{n-u-P3}
Let $G(1,\beta)$ with $\beta\neq0$, be a $3$-dimensional non-unimodular Lie group
with left invariant metric and let $\{e_1,e_2,e_3\}$ be an orthonormal basis of the Lie algebra $\mathfrak{g}$
as described before. Then the unit magnetic vector fields on $G$ are 
\begin{enumerate}
\item $\pm e_3$, case when the strength $q$ is arbitrary,
\item
$  \pm\dfrac{1}{\sqrt{\beta^2+1}}(e_2+\beta e_3),
$
case when the strength $q=\dfrac{4}{\beta^2+2}-2\beta^2-6$. \\
{\rm ($q$ never vanishes.)}
\end{enumerate}
\end{proposition}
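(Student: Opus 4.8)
The plan is to solve directly the polynomial system consisting of the six scalar equations displayed immediately before the Proposition, together with the normalization $x_1^2+x_2^2+x_3^2=1$, under the standing assumptions $\alpha=1$ and $\beta\neq0$ (so $u=3\beta^2+4$, $v=\beta^2$, $w=-\beta^2$ and $|\nabla X|^2=2\beta^2+4(x_1^2+x_2^2)+4x_2x_3\beta$). The decisive first move is to read off the third equation, which is $2x_1x_3\beta^2=0$: since $\beta\neq0$ this forces $x_1x_3=0$, so the whole discussion splits into the branch $x_1=0$ and the branch $x_3=0$.

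First I would dispose of the branch $x_1\neq0$, where necessarily $x_3=0$. The fourth equation then gives $|\nabla X|^2=2(\beta^2+2)$, which is automatically consistent with $x_1^2+x_2^2=1$; the fifth equation becomes an identity; and the sixth equation reduces to $4x_2\beta=0$, forcing $x_2=0$. Thus the only candidate is $X=\pm e_1$, but plugging $x_2=x_3=0$, $x_1=\pm1$ into the first equation yields $-2(3\beta^2+4)=0$, a contradiction. Hence no magnetic unit vector field has a nonzero $e_1$-component, in agreement with the "$e_1$ never" entry in the preceding table.

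Next I treat the branch $x_1=0$, where $x_2^2+x_3^2=1$ and the second, third and fourth equations hold trivially. If $x_2=0$ then $x_3=\pm1$, $|\nabla X|^2=2\beta^2$, and one checks that the fifth, sixth and first equations are satisfied for every $q$; this produces case $(1)$, namely $X=\pm e_3$ with arbitrary strength. If instead $x_2\neq0$, the fifth equation forces $|\nabla X|^2=2(\beta^2+2)$, i.e. $x_2^2+x_2x_3\beta=1=x_2^2+x_3^2$, whence $x_3(x_2\beta-x_3)=0$; the possibility $x_3=0$ is excluded since then the sixth equation would give $4x_2\beta=0$, impossible. Therefore $x_3=x_2\beta$, and $x_2^2(1+\beta^2)=1$ pins down $X=\pm\frac{1}{\sqrt{\beta^2+1}}(e_2+\beta e_3)$, which is case $(2)$; the sixth equation is then an identity.

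Finally I would compute the strength in case $(2)$ from the first equation: substituting $x_1=0$ and $2x_2+\beta x_3=(\beta^2+2)x_2$, and dividing by $x_2^2\neq0$, reduces it to $-(\beta^2+2)(3\beta^2+4)+\beta^4=q(\beta^2+2)$; writing $\beta^4=(\beta^2+2)(\beta^2-2)+4$ gives $q=\frac{4}{\beta^2+2}-2\beta^2-6$. That $q$ never vanishes follows because $q=0$ would force $(\beta^2+1)(\beta^2+4)=0$, which has no real solution. None of the calculations is deep; the only real obstacle is bookkeeping — keeping the cross term $4x_2x_3\beta$ inside $|\nabla X|^2$ under control throughout the case split, and carrying out the final simplification of $q$ correctly — and that is where essentially all of the (modest) effort lies.
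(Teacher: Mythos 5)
Your proposal is correct and follows exactly the route the paper intends: it solves the displayed six-equation system for $\alpha=1$, $\beta\neq0$ directly, using $2x_1x_3\beta^2=0$ to split into the branches $x_1=0$ and $x_3=0$, eliminating $\pm e_1$ and the $x_3=0$, $x_2\neq 0$ subcase, and recovering $\pm e_3$ (any $q$) and $\pm\frac{1}{\sqrt{\beta^2+1}}(e_2+\beta e_3)$ with $q=\frac{4}{\beta^2+2}-2\beta^2-6$. All the computations, including the simplification of $q$ and the non-vanishing check via $(\beta^2+1)(\beta^2+4)=0$, check out.
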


Before discussing the general situation, we ask,
as in the case when $G$ is unimodular, the following question:

\quad
{\sl When $\pm e_1$, $\pm e_2$ and $\pm e_3$ are, respectively
unit magnetic vector fields in $G(\alpha,\beta)$?}

{\bf Q1.} When $\pm e_1$ is magnetic?

Equation \eqref{n-u-1.1} implies $-(1+\alpha)u+(1-\alpha)v=0$, that is equivalent to
\newline
$1+3\alpha^2+3\alpha^2\beta^2=0$. But this is a contradiction.

{\bf A1.} The unit vector field $\pm e_1$ is never magnetic.

\medskip

{\bf Q2.} When $\pm e_2$ is magnetic?

The two magnetic equations \eqref{non-uni-MEq1} and \eqref{non-uni-MEq2} imply $q=-u$ and $\beta=0$.
\newline
We have ${\mathcal{D}}=1-\alpha^2\leq 1$.

{\bf A2.} The unit vector field $\pm e_2$ is magnetic only on $G(\alpha,0)$ and in this case the strength is $q=-(1+\alpha)^2$.

\medskip

{\bf Q3.} When $\pm e_3$ is magnetic?

{\bf A3.} The unit vector field $\pm e_3$ is magnetic on $G(\alpha,\beta)$ if and only if
\begin{itemize}
\item either $G=G(1,\beta)$ (case when ${\mathcal{D}}=0$ and we do not have any condition for $q$),
\item or $G=G(\alpha,0)$ with $\alpha\neq1$ and in this case ${\mathcal{D}}\leq1$ and the strength is $q=-(1-\alpha)^2$.
\end{itemize}

Finally, we analyze the general situation.

We are looking first for solutions $X=x_1e_1+x_2e_2+x_3e_3$ with $x_1\neq0$.

From \eqref{n-u-2.1} we find $|\nabla X|^2=2(1+\alpha^2+\alpha^2\beta^2)$. Replacing $|\nabla X|^2$ in equations
\eqref{n-u-2.2} and \eqref{n-u-2.3} we obtain
\begin{equation}
\left\{
\begin{array}{l}
\left[\beta^2-(1-\alpha)^2-\alpha^2\beta^2\right]x_2-2\beta(1-\alpha)x_3=0,\\[2mm]
2\beta(1+\alpha)x_2+\left[\beta^2-(1+\alpha)^2-\alpha^2\beta^2\right]x_3=0.
\end{array}\right.
\end{equation}
This is a homogeneous system of two linear equations whose determinant is ${\mathcal{D}}^2$.

So, if ${\mathcal{D}}\neq0$, we have only the trivial solution, i.e. $x_2=0$ and $x_3=0$. This implies that
$X=\pm e_1$. Bringing the question {\bf Q1} to the present moment, we note that this situation cannot occur. 
Therefore, a necessary condition to have a solution $X$ (with $x_1\neq0$) is ${\mathcal{D}}=0$, that is $\alpha=1$.
This situation is explained in details in Propositions \ref{n-u-P2} and \ref{n-u-P3}. 
Subsequently, we find that the solutions do not exist if $x_1\neq0$.

We are looking now for solutions $X=x_2e_2+x_3e_3$. The only non-trivial equations are obtained
from \eqref{n-u-1.1}, \eqref{n-u-2.2} and \eqref{n-u-2.3}. The last two yield
\begin{equation}
\label{n-u-gen-01}
\left\{
\begin{array}{l}
2\alpha x_2x_3(x_3-\beta x_2)-\beta(1-\alpha)x_3=0,\\[2mm]
2\alpha x_2x_3(x_2+\beta x_3)-\beta(1+\alpha)x_2=0.
\end{array}\right.
\end{equation}

The cases $x_2=0$ and $x_3=0$ are discussed in {\bf Q3} and {\bf Q2}, respectively.
Moreover, the case $\alpha=0$ is presented in Proposition~\ref{n-u-P1}.
Therefore, from now on we consider $\alpha\neq0$ and we suppose that both $x_2$ and $x_3$ do not vanish.

An immediate consequence is that $\beta$ cannot vanish. 

The equations \eqref{n-u-gen-01} imply
$$
\beta(1+\alpha)x_2^2-2\alpha x_2x_3+\beta(1-\alpha)x_3^2=0.
$$
This equation has solutions if and only if ${\mathcal{D}}\leq1$, namely
\begin{equation}
\label{n-u-gen-sol}
x_2=\frac{\varepsilon t}{\sqrt{1+t^2}}, \quad
x_3=\frac{\varepsilon}{\sqrt{1+t^2}}, \ {\rm where} \ \varepsilon=\pm1
\ {\rm and}\ 
t=\frac{\alpha\pm\sqrt{1-{\mathcal{D}}}}{\beta(1+\alpha)}.
\end{equation}
They are solutions also for \eqref{n-u-gen-01}. Finally, the strength $q$ is obtained from equation \eqref{n-u-1.1}:
\begin{equation}
\label{n-u-q-gen}
q=-\frac{2(1+\beta^2)\left(1+\alpha^2+\alpha^2\beta^2\pm2\sqrt{1-{\mathcal{D}}}\right)}{2+\beta^2}.
\end{equation} 
\begin{remark} The conclusion of Proposition~\ref{n-u-P3} is a particular case of the previous discussion,
that is choosing the non vanishing solution for $t$ in the case $\alpha=1$.
\end{remark}
\begin{remark} In the case when ${\mathcal{D}}=1$, the unit magnetic vector field is 
\newline
$\pm\left(\sqrt{\frac{1-\alpha}{2}}~e_2+\sqrt{\frac{1+\alpha}{2}}~e_3\right)$ and the strength is 
$q=-\frac{2}{(1-\alpha^2)(2-\alpha^2)}=-\frac{2(\beta^2+1)^2}{\beta^2+2}$.
\end{remark}

\newpage

\begin{theorem}
Let $G$ be a $3$-dimensional non-unimodular Lie group with a left invariant metric and let $\{e_1, e_2, e_3\}$ be an orthonormal
basis of the Lie algebra $\mathfrak{g}$ satisfying \eqref{Lie-bra-n-u} with \eqref{n-u-coeff}. 
Then the unit magnetic vector fields on $G$ belong to the following list
\begin{center} \rm
\begin{tabular}{|l|c|c|}
\Xhline{3\arrayrulewidth}   
 & &\\
\quad condition for ${\mathcal{D}}$ & the set of all unit & the strength\\
                  & magnetic vector fields on $G(\alpha,\beta)$ &\\
\Xhline{3\arrayrulewidth}   
${\mathcal{D}}>1$ & $\emptyset$ & -- \\[2mm]
\hline
${\mathcal{D}}=1$, $\alpha=0$, $\beta=0$ & ${\mathcal{S}}\cap\{e_2,e_3\}_{\mathbb{R}}$& $q=-1$\\[2mm]
${\mathcal{D}}=1$, $\alpha\in(0,1)$ $\beta=\frac{\alpha}{\sqrt{1-\alpha^2}}$  & 
   $\pm\left(\sqrt{\frac{1-\alpha}{2}}~e_2+\sqrt{\frac{1+\alpha}{2}}~e_3\right)$&
   $q=-\frac{2}{(1-\alpha^2)(2-\alpha^2)}$\\[3mm]
\hline
${\mathcal{D}}\in(0,1)$, $\alpha\in(0,1)$, $\beta=0$ & $\pm e_2$& $q=-(1+\alpha)^2$\\[2mm]
                                         & $\pm e_3$ & $q=-(1-\alpha)^2$\\[2mm]
${\mathcal{D}}\in(0,1)$, $\alpha\in(0,1)$, $\beta\neq0$ & $x_2e_2+x_3e_3$ with \eqref{n-u-gen-sol} & \eqref{n-u-q-gen} \\
\hline
${\mathcal{D}}=0$, $\alpha=1$, $\beta=0$ & $\pm e_3$ & $\forall q$ \\ [2mm]
                             & $\pm e_2$ & $q=-4$\\[2mm]
${\mathcal{D}}=0$, $\alpha=1$, $\beta\neq0$ & $\pm e_3$ & $\forall q$\\ [2mm]                            
 & $\pm\frac{1}{\sqrt{\beta^2+1}}(e_2+\beta e_3)$ &
            $q=\frac{4}{\beta^2+2}-2\beta^2-6$\\[2mm]
\hline
${\mathcal{D}}<0$, $\beta=0$ & $\pm e_2$ & $q=-(1+\alpha)^2$ \\[2mm]
                 & $\pm e_3$ & $q=-(1-\alpha)^2$\\[2mm]
${\mathcal{D}}<0$, $\beta\neq0$ & $x_2e_2+x_3e_3$ with \eqref{n-u-gen-sol} & \eqref{n-u-q-gen} \\
\Xhline{3\arrayrulewidth}   
\end{tabular}
\end{center}
\end{theorem}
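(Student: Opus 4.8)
The plan is to apply Theorem~\ref{thm4.1}: a left invariant unit vector field $X=x_1e_1+x_2e_2+x_3e_3$ with $x_1^2+x_2^2+x_3^2=1$ is a magnetic unit vector field of charge $q$ on $G(\alpha,\beta)$ if and only if the two equations \eqref{Meq} hold. First I would substitute the Levi--Civita connection and the curvature tensor recorded in the two Propositions of \S6 into $\mathrm{tr}_gR(\nabla_\cdot X,X)\cdot=q\nabla_XX$ and into $\overline{\Delta}_gX=|\nabla X|^2X$, and read off the component equations; this is exactly how one arrives at the systems \eqref{non-uni-MEq1} and \eqref{non-uni-MEq2}, together with the explicit value of $|\nabla X|^2=g(\overline{\Delta}_gX,X)$. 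The rest of the argument is then a careful discussion of this coupled polynomial system in $(x_1,x_2,x_3;q)$, organized according to the Milnor invariant $\mathcal{D}=(1-\alpha^2)(1+\beta^2)$ and to which of the $x_i$ vanish.

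I would first treat the coordinate directions. Plugging $X=\pm e_1$ into \eqref{n-u-1.1} forces $1+3\alpha^2+3\alpha^2\beta^2=0$, which is impossible, so $\pm e_1$ is never magnetic (answer A1). Plugging $X=\pm e_2$ into \eqref{non-uni-MEq1}--\eqref{non-uni-MEq2} gives $\beta=0$ and $q=-u=-(1+\alpha)^2$ (answer A2), and $X=\pm e_3$ is magnetic exactly when either $\alpha=1$ (any $q$) or $\alpha\ne1$ with $\beta=0$ and $q=-(1-\alpha)^2$ (answer A3). Next I would handle solutions with $x_1\ne0$: \eqref{n-u-2.1} pins down $|\nabla X|^2=2(1+\alpha^2+\alpha^2\beta^2)$, and substituting this back into \eqref{n-u-2.2}--\eqref{n-u-2.3} produces a homogeneous linear system in $(x_2,x_3)$ whose determinant equals $\mathcal{D}^2$; hence if $\mathcal{D}\ne0$ we get $x_2=x_3=0$, i.e. $X=\pm e_1$, already excluded, so $x_1\ne0$ forces $\mathcal{D}=0$, that is $\alpha=1$ (recall $\alpha,\beta\ge0$). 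The two subcases $\alpha=1$, $\beta=0$ and $\alpha=1$, $\beta\ne0$ are then settled by directly solving the small reduced systems displayed before Propositions~\ref{n-u-P2} and \ref{n-u-P3}; one finds no solution with $x_1\ne0$ and precisely the solutions listed in those propositions.

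The main work is the generic case $x_1=0$, $x_2x_3\ne0$, $\alpha\notin\{0,1\}$ (the value $\alpha=0$ being covered by Proposition~\ref{n-u-P1} and $\alpha=1$ by Propositions~\ref{n-u-P2}--\ref{n-u-P3}). Here \eqref{n-u-2.2} and \eqref{n-u-2.3} become \eqref{n-u-gen-01}, and forming $x_3$ times the first equation plus $x_2$ times the second, and using $x_2^2+x_3^2=1$, collapses the quartic terms and leaves the homogeneous quadratic $\beta(1+\alpha)x_2^2-2\alpha x_2x_3+\beta(1-\alpha)x_3^2=0$. This immediately forces $\beta\ne0$ (otherwise $x_2=0$), and, viewed as a quadratic in $t=x_2/x_3$, it has discriminant $4\alpha^2-4\beta^2(1-\alpha^2)=4(1-\mathcal{D})$; thus real solutions exist exactly when $\mathcal{D}\le1$, and then $t=\bigl(\alpha\pm\sqrt{1-\mathcal{D}}\,\bigr)/(\beta(1+\alpha))$, which, after normalizing to $x_2^2+x_3^2=1$, is precisely \eqref{n-u-gen-sol}. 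I would then check that these candidates actually solve the full system \eqref{n-u-gen-01} (not merely the derived quadratic) and substitute them into the $e_1$-component \eqref{n-u-1.1} to obtain the charge \eqref{n-u-q-gen}.

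Finally I would assemble everything into the table, sorting by $\mathcal{D}>1$ (no solutions), $\mathcal{D}=1$, $0<\mathcal{D}<1$, $\mathcal{D}=0$ and $\mathcal{D}<0$, each further split by $\beta=0$ or $\beta\ne0$, cross-checking the boundary value $\mathcal{D}=1$ (where the two roots for $t$ coalesce and $\beta=\alpha/\sqrt{1-\alpha^2}$) against the Remark, and verifying consistency of the generic family \eqref{n-u-gen-sol}--\eqref{n-u-q-gen} with Propositions~\ref{n-u-P1}--\ref{n-u-P3} and with answers A1--A3 at the various degenerate parameter values. The one genuinely delicate point is the bookkeeping in the generic case: tracking signs, deciding which of the two roots for $t$ is admissible for given $(\alpha,\beta)$, and correctly passing to the boundary $\mathcal{D}=1$; everything else is a direct, if lengthy, substitution.
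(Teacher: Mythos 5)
Your proposal follows essentially the same route as the paper: derive the component systems \eqref{non-uni-MEq1}--\eqref{non-uni-MEq2}, settle the coordinate directions (Q1--Q3) and the special parameter values $\alpha=0$ and $\alpha=1$ via Propositions~\ref{n-u-P1}--\ref{n-u-P3}, rule out $x_1\neq0$ by the linear system with determinant $\mathcal{D}^2$, and reduce the generic case $x_1=0$, $x_2x_3\neq0$ to the quadratic $\beta(1+\alpha)x_2^2-2\alpha x_2x_3+\beta(1-\alpha)x_3^2=0$ with discriminant $4(1-\mathcal{D})$, yielding \eqref{n-u-gen-sol} and \eqref{n-u-q-gen}. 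This matches the paper's argument, including the explicit combination $x_3\cdot(\text{first})+x_2\cdot(\text{second})$ of \eqref{n-u-gen-01} and the final assembly of the table by the sign of $\mathcal{D}$, so the plan is correct.
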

We conclude this section by noting that, according to \cite[Table III]{GV2002UMI}
in non-unimodular case, the list of left-invariant harmonic unit vector fields determining 
harmonic maps is poor, contrary to the list obtained in the table above. 
This argument represented a good motivation for us to study unit magnetic vector fields.

\vspace{5mm}

{\bf Acknowledgements.}
This work was born while M.I.M. visited the University of Tsukuba, Japan within the
mobility grant PN-III-P1-1.1-MC-2019-1311.
The first named author is partially supported by JSPS KAKENHI JP19K03461.
The second named author is partially supported by Romanian Ministry of Research, Innovation and Digitization, 
within Program 1 – Development of the national RD system, Subprogram 1.2 – Institutional Performance – RDI excellence funding projects, Contract no.11PFE/30.12.2021.

{\bf Conflict of Interest}: Nil

\medskip

\bibliographystyle{amsplain}

\end{document}